\documentclass{article}

\usepackage{fullpage}
\usepackage{graphics}
 \usepackage{graphicx}	
\usepackage{amssymb}
 \usepackage{amsmath}
 \usepackage{amsthm}

\usepackage{stmaryrd}
\usepackage{lmodern}

\usepackage{xcolor}
\usepackage{soul}

\newcommand{\tcr}[1]{\textcolor{red}{#1}}
\newcommand{\tcb}[1]{\textcolor{blue}{#1}}

\setstcolor{purple}

\renewcommand{\theequation}{\arabic{section}.\arabic{equation}}

 \newtheorem{theorem}{Theorem}[section]
\newtheorem{lemma}{Lemma}[section]

\newtheorem{corollary}[lemma]{Corollary}

\newcommand{\R}{\mathbb{R}}

\renewcommand{\H}{\mathcal{H}} %Bbb

\newcommand{\E}{E_{p,\la}} 
 
\newcommand{\A}{\mathcal{A}}

\newcommand{\lb}{\llbracket} 
\newcommand{\rb}{\rrbracket} 
\newcommand{\lra}{\longrightarrow} 
\newcommand{\Lra}{\Longrightarrow}
\newcommand{\ra}{\rightarrow}

 \newcommand{\sse}{\subseteq}
\renewcommand{\~}{\tilde} 
\renewcommand{\-}{\bar}

\newcommand{\pd}{\partial} 
\newcommand{\rhu}{\;{\overset{*}{\rightharpoonup}}\; }

\newcommand{\fal}{\forall}
\newcommand{\8}{\infty} 

\newcommand{\vph}{\varphi}
\newcommand{\vep}{\varepsilon} %lettere greche
 
\newcommand{\dt}{\delta} 
\newcommand{\al}{\alpha}
 
\newcommand{\gm}{\gamma}
 
\newcommand{\sm}{\Sigma}
 \newcommand{\om}{\Omega}
\newcommand{\la}{\lambda} 
\renewcommand{\vartheta}{\Theta}

\newcommand{\disp}{\displaystyle}

% Dejan's changes
\DeclareMathOperator{\dist}{dist}

\DeclareMathOperator{\diam}{{\operatorname{diam}}}

%  \setlength{\leftmargin}{43pt}
%  \setlength{\topsep}{0pt}
 % \setlength{\itemsep}{-3pt}}}
 % {\vspace{5pt}fff
{   \end{list} }

\renewcommand{\d}{\,{\operatorname{d}}}

\title{The average distance problem with an Euler elastica penalization\footnote{This paper will appear in Interfaces and Free Boundaries. }}

\begin{document}

\date{}

\author{Qiang Du \thanks{Department of Applied Physics and Applied Mathematics and Data Science Institute, Columbia University, New York, NY, 10027, USA.  Email: qd2125@columbia.edu. }
\thanks{Supported in part by NSF DMS-2012562 and DMS-1937254.}
%, New York, NY 10027, USA
\and Xinyang Lu \thanks{Corresponding author. Department of Mathematical Sciences, Lakehead University, Thunder Bay, Ontario, P7B 5E1, Canada AND
Department of Mathematics and Statistics, McGill University,
805 Sherbrooke St. W., Montreal, QC H3A 0B9, Canada. Email: xlu8@lakeheadu.ca. }
\thanks{Supported in part by NSERC Discovery Grant and internal Lakehead University grants. }
\and Chong Wang \thanks{Department of Mathematics, Washington and Lee University, Lexington, VA, 24450, USA. Email: cwang@wlu.edu.}
}

\maketitle

\begin{abstract}
\noindent 
We consider the minimization of 
an  average distance functional defined on a two-dimensional domain $\Omega$ 
with an Euler elastica penalization associated with $\pd \Omega$, the boundary of $\Omega$.
The average distance is given by
\begin{equation*}
\int_{\Omega} \dist^p(x,\pd \Omega )\d x
\end{equation*}
where 
$p\geq 1$ is a given parameter,
and $\dist(x,\pd \Omega)$ is the Hausdorff distance between $\{x\}$ and $\pd \Omega$.
The penalty term is a multiple of the Euler elastica (i.e., the Helfrich bending energy or the Willmore energy) of
the boundary curve ${\pd \Omega}$, 
 which is proportional to the integrated squared curvature defined on $\pd \Omega$, as given by
\begin{equation*}
\la \int_{\pd \Omega} \kappa_{\pd \Omega}^2\d\H_{\llcorner \pd \Omega}^1,
\end{equation*}
where $\kappa_{\pd \Omega}$ denotes the (signed) curvature of $\pd \Omega$ and $\la>0$ denotes a
penalty constant. 
The domain
 $\Omega$ is allowed to vary among compact, convex sets
of $\mathbb{R}^2$ with Hausdorff dimension equal to $2$\tcr{.} 
Under 
no a priori assumptions on the regularity of the boundary $\pd \Omega$, we prove the
existence of
minimizers of $E_{p,\la}$. Moreover, we establish the
$C^{1,1}$-regularity of its minimizers. An original construction of a suitable family of competitors plays a decisive role in proving the regularity.
 \end{abstract}

 \textbf{Keywords.}
%% keywords here, in the form: keyword \sep keyword
 average-distance problem, regularity, Euler elastica,
Willmore energy

\textbf{Classification. }%% MSC codes here, in the form: \MSC code \sep code
%% or \MSC[2008] code \sep code (2000 is the default)
49Q20, %Variational problems in a geometric measure-theoretic setting
 49K10, %free problems with 2 or more variables
 49Q10 % manifold problems, other than minimal surfaces

%%%%%%%%%%%%%%%%%%%%%%%%%%%%%%%%%%%%%%%%%%%%%%%%%%%%%%%%%%%%%%%%%%%%%  Introduction

\section{Introduction}

The curvature of boundaries plays an important role in many physical and biological models. For instance, the elasticity of cell membranes 
is strongly correlated to its bending, and thus to its curvature.
One way to quantify the bending energy per unit area
of closed lipid bilayers was proposed by Helfrich in \cite{Helf}, and is now commonly referred to as ``Helfrich bending energy''.
A related notion, from differential geometry, is the ``Willmore energy'', which measures how much a surface differs
from the sphere \cite{DLW05}.
In 2D, the Willmore energy simplifies to  be a multiple of the integrated squared curvature, which is also commonly referred as the
Euler elastica.

Easy access to the boundary is also relevant in nature: many processes
 such as heat dissipation, waste disposal and nutrient absorption,
  are more efficient when the whole body has ``easy access'' to its boundary. One way to quantify the ``average accessibility''
  for points of a set $\Omega\subset \mathbb{R}^2$ to the boundary $\partial \Omega$
 is an energy functional of the form
 \begin{equation}\label{FF}
\Omega\longmapsto  \int_{\Omega} \dist^p(x,\pd \Omega)\d x\tcb{,}
\end{equation}
for a given parameter $p\geq 1$.

There are other energy functionals  sharing similar geometric features with \eqref{FF}.
For example,  \eqref{FF} is 
 formally similar to the average-distance functional associated with a  {\em given} domain 
$\Omega\subset \mathbb{R}^2$, 
\begin{equation}\label{FA}
\Sigma\longmapsto \int_{\Omega} \dist^p(x,\sm)\d x,
\end{equation}
where the unknown $\sm$ varies among compact subsets of $\bar{\Omega}$. In many existing studies, 
 $\sm$ is assumed  to be a connected set with its Hausdorff dimension 
equal to $1$ and its one dimensional Hausdorff measure is to be bounded from above by  a specified constant.
Problems of this type are used in many modeling applications, such as urban planning and optimal pricing. 
For a (non-exhaustive) list of references on the average-distance problem we refer to the works by Buttazzo et al. 
\cite{BMS, BOS, BS3, BS4} and \cite[Chapter 3.3]{BPSS}.
Also related are the papers by Paolini and Stepanov \cite{PaoSte},
Santambrogio and Tilli \cite{ST}, Tilli \cite{Til}, Lemenant and Mainini \cite{LM},
Slep\v{c}ev \cite{Slepcev}, and the review paper by Lemenant \cite{Lem}. Meanwhile, if  $\sm$ 
is assumed to vary among  sets $\Omega$ consisting of discrete points with a fixed cardinality, say $k$, 
 then the  minimization of
the functional  in \eqref{FA}, often named the quantization error in this case, is related to the centroidal Voronoi
tessellations \cite{DFG} and $k$-means, which are widely studied in  subjects such as
vector quantization, signal compression, sensor and resource placement, geometric meshing, and so  on \cite{DGJ}.
Similar variational problems entailing a competition between classical perimeter and nonlocal repulsive interaction were studied by Muratov and Kn\"upfer \cite{mk},
 Goldman, Novaga and Ruffini \cite{gnru}, and Goldman, Novaga and R\"oger \cite{gnro}.  Figalli, Fusco, Maggi, Millot, and Morrini studied a competition between a nonlocal $s$-perimeter and a nonlocal repulsive interaction term \cite{ffmmm}.

In this work, we consider the average distance energy functional as a functional
of the domain $\Omega$ with $\sm=\pd\Omega$  and penalized by the Euler elastica  of $\pd\Omega$, as
given by
\begin{eqnarray*}
E_{p,\lambda} (\Omega) = \int_{\Omega}  \text{ dist}^p (x, \partial \Omega) \ dx + \lambda \int_{\partial \Omega} \kappa_{\partial \Omega}^2 d {\cal{H}}_{\llcorner \partial \Omega}^1,
\end{eqnarray*}
where $p \geq 1, \lambda > 0$ are given parameters,  with $\lambda$ proportional to a bending constant,
and $\H_{\llcorner \pd \Omega}^1$ denotes the Hausdorff measure restricted on $\pd \Omega$. For further properties of the Hausdorff measure, we refer to \cite{AFP}. 
We consider a free boundary problem associated with 
the minimization of $E_{p,\lambda}$ among domains $\Omega$ in 
 the following admissible set
\begin{eqnarray*}
{\cal {A}}  := \{ \Omega :  \Omega \subset \mathbb{R}^2  \text{ is compact, convex and Hausdorff two-dimensional} \}.
\end{eqnarray*}
For any $\Omega_1, \Omega_2 \in {\cal {A}}$, define the metric in $\cal{A}$ as
\begin{eqnarray} \label{d}
d(\Omega_1, \Omega_2) := {\cal{H}}^2 (\Omega_1 \triangle \Omega_2),
\end{eqnarray}
where $\triangle$ denotes the symmetric difference of the two sets and ${\cal{H}}^2 $ denotes the two-dimensional
Hausdorff measure.

The term 
\begin{equation}\label{K1}
\int_{\pd \Omega} \kappa_{\pd \Omega}^2\d\H_{\llcorner \pd \Omega}^1 \nonumber
\end{equation}
is the integrated squared curvature \cite{dC76}. Since we do not make any a priori assumptions on the regularity of the boundary
$\pd \Omega$, we need to make sense of the integrand $\kappa_{\pd \Omega}$.
 For future reference we will define it as follows:
 let $\gm$ be an arc-length parameterization of $\pd  \Omega$,
 and define
\begin{equation}\label{K2}
\int_{\pd \Omega} \kappa_{\pd \Omega}^2\d\H_{\llcorner \pd \Omega}^1 :=
\left\{\begin{array}{cl}
{\disp{ \int_0^{\H^1(\pd \Omega)} |\gm''|^2
\d s}}  & \text{if } \gm \in H^2((0,\H^1(\pd \Omega));\R^2 ), \\
&\\
+\8 & \text{otherwise}.
\end{array}\right.
\end{equation}
Here,  $\H^1(\pd \Omega)$ denotes the total length of $\pd \Omega$.
That is, we are reducing our minimization problem to quite regular sets,
i.e. domains $\om$ whose boundaries admit an $H^2$ regular arc-length parameterization. Therefore, we are considering the minimization problem
\begin{align*}
\inf \bigg\{  &\int_{\Omega}  \text{ dist}^p (x, \partial \Omega) \d x + \lambda\int_0^{\H^1(\pd \Omega)} |\gm''|^2
\d s : \H^1(\pd \Omega)>0,\\
& |\gm'|=1,\ \gm( [ 0,\H^1(\pd \Omega) ]) \text{ is the boundary of a compact convex set} \bigg\}.
\end{align*}

We note first a simple rescaling analysis where the domain $\Omega$ is stretched by a factor
$\epsilon>0$. Given the two-dimensional nature, if $\epsilon>1$,  then the average distance functional is  scaled by no more  than $\epsilon^{2+p}$ but no less than $\epsilon^2$. Meanwhile, the Euler elastica  gets scaled by $1/\epsilon$.  This shows that the optimal $\Omega$, if exists, must have a suitable and finite size for any prescribed  $\lambda>0$. Indeed, the energy
considered might be viewed as a competition between access to the boundary and the elastic stiffness of the boundary.

The main result of this paper is:
\begin{theorem}\label{regularity}
 Given $p \geq 1, \lambda > 0$, any minimizer $\Omega$ of $E_{p, \lambda}$ is $C^{1,1}$-regular with a Lipschitz constant at most
\begin{eqnarray*}
C = C (p, \lambda) : = \sqrt{ \la^{-1}p(C_1+1)^{p-1}\pi C_1^2+ 2 C_2},
\end{eqnarray*}
where 
\begin{eqnarray}
&& C_1 = C_1 (p, \lambda)  :=   (p+1)(p+2) \left   (\frac{24}{\la} \right)^{p+1} (2 (1+\pi \lambda))^{p+2},  \label{C_1}\\
&& C_2 = C_2 (p, \lambda)  :=   32 (\la^{-1}+ \pi)^2 + 32 \sqrt{2 C_1 \pi }  (\la^{-1} + \pi)^{5/2},
%\frac{\pi}{\sqrt{2}} + 2 C_3 + \pi^{\frac{5}{2}} C_1 C_3^{-\frac{3}{2}} + 2\sqrt{2} (\pi+1) \sqrt{\pi} C_1 C_3^{-\frac{1}{2}},
 \label{C_2}
\end{eqnarray}
are constants independent of $\Omega$.
That is, 
the boundary $\pd \Omega$ admits a $C^{1,1}$-regular, arc-length parameterization $\gm:[0,\H^1(\pd \Omega)]\lra \R^2$
such that
\begin{equation*}
|\gm'(t_1)-\gm'(t_2)|\leq C |t_1-t_2|
\end{equation*}
 for any $t_1,t_2$.
\end{theorem}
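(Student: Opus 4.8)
The plan has two stages: first, crude a priori control on the size of a minimizer obtained by comparison with a disk; then the pointwise curvature bound, obtained by contradiction through an explicit family of competitors.

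\emph{A priori estimates.} Fix a minimizer $\Omega$. For any disk $B$ the energy $\E(B)$ is an explicit, finite function of its radius and of $p,\la$, so minimality gives $\E(\Omega)\le\min_B\E(B)=:M$, a constant depending only on $p$ and $\la$. In particular $\la\int_0^{\H^1(\pd\Omega)}|\gm''|^2\,\d s\le M$, so the elastica term is bounded; together with the convexity identity $\int_0^{\H^1(\pd\Omega)}|\gm''|\,\d s=2\pi$ (the total turning of a simple closed convex curve) and Cauchy--Schwarz this already yields a lower bound $\H^1(\pd\Omega)\ge 4\pi^2\la/M$ on the perimeter. For the opposite direction one uses $\E(\Omega)\le M$ together with convexity again: a long, thin convex set necessarily has a very curved tip, hence a large $\int|\gm''|^2$, which is excluded, whereas a long convex set that is not thin has a large average-distance term, also excluded; this bounds the perimeter, the diameter and the area of $\Omega$ from above. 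These reductions are exactly what the constants $C_1$ and $C_2$ record -- roughly, $C_1$ controls $\diam\Omega$ (up to an additive constant), $\pi C_1^2$ controls $\H^2(\Omega)$, and the $(\la^{-1}+\pi)$-factors in $C_2$ come from the perimeter bound.

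\emph{The competitor.} Suppose the asserted $C$-Lipschitz bound on $\gm'$ fails. Writing $\th$ for the tangent angle ($\gm'=(\cos\th,\sin\th)$, so $|\gm''|=|\th'|$ and $\th$ nondecreasing by convexity), this means there are $a<b$ with turning $\psi:=\th(b)-\th(a)>C(b-a)$, and then Cauchy--Schwarz gives $\int_a^b|\gm''|^2\,\d s\ge\psi^2/(b-a)>C\psi$. After an elementary adjustment of $[a,b]$ (so that the turning is not concentrated near one endpoint), I enlarge it by two flanks $[a-\rho,a]$ and $[b,b+\rho]$ with $\rho$ of the order $\psi/C$, and build the competitor $\tilde\Omega$ by replacing the arc $\gm([a-\rho,b+\rho])$ with the curve that follows the two supporting lines of $\Omega$ at $\gm(a-\rho)$ and $\gm(b+\rho)$ and rounds their corner by a circular arc of radius of order $3/C$. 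This curve has the same endpoints and tangent directions as the replaced arc (so $\pd\tilde\Omega$ still admits an $H^2$, indeed $C^{1,1}$, arc-length parametrization, hence $\E(\tilde\Omega)<\8$), it turns monotonically from $\th(a-\rho)$ to $\th(b+\rho)$, and it lies in the wedge cut out by those two supporting lines; consequently $\tilde\Omega$ -- obtained by adjoining to $\Omega$ the region between $\gm([a-\rho,b+\rho])$ and the new curve -- is again compact and convex, i.e.\ $\tilde\Omega\in\A$. The choice $\rho\sim\psi/C$ is precisely what lets the radius-$3/C$ arc fit inside the wedge, and this is what forces the enlargement by flanks in the first place. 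Now the elastica term strictly decreases: the new curve contributes only $\frac{C}{3}$ times the turning of $\gm([a-\rho,b+\rho])$, while $\gm([a-\rho,b+\rho])$ contributed more than $C\psi$ already on $[a,b]$; absorbing the flank contributions by an elementary pointwise inequality gives an elastica gain of at least a fixed multiple of $\la C\psi$. On the other hand $\pd\Omega$ and $\pd\tilde\Omega$ coincide outside a region of width of order $\rho\psi\sim\psi^2/C$, so their Hausdorff distance is $\delta\lesssim\psi^2/C$, $\diam\tilde\Omega\le\diam\Omega+1$, and the average-distance term increases by at most $p(\diam\tilde\Omega)^{p-1}\H^2(\Omega)\,\delta+\H^2(\tilde\Omega\setminus\Omega)\,\delta^p\lesssim p(C_1+1)^{p-1}\pi C_1^2\,\psi^2/C$.

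\emph{Conclusion.} Since $\Omega$ minimizes $\E$, one has $\E(\tilde\Omega)-\E(\Omega)\ge0$, so the elastica gain ($\gtrsim\la C\psi$) must be dominated by the average-distance loss ($\lesssim p(C_1+1)^{p-1}\pi C_1^2\,\psi^2/C$); this forces $\psi$ to be at least a fixed multiple of $\la C^2/\big(p(C_1+1)^{p-1}\pi C_1^2\big)$. But $\psi$ cannot exceed the total turning $2\pi$. With the value of $C$ in the statement -- whose square has $\la^{-1}p(C_1+1)^{p-1}\pi C_1^2$ as its leading term, the extra $2C_2$ absorbing the numerical constants lost above and the degenerate regimes (for instance when $\psi$ is not small, so that the flanks would wrap around a short boundary) -- this lower bound exceeds $2\pi$, a contradiction. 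Hence there is no arc $[a,b]$ with $\th(b)-\th(a)>C(b-a)$; equivalently $\gm'$ is $C$-Lipschitz, which is exactly the claimed $C^{1,1}$ regularity with the stated constant. The delicate point is the competitor itself: a straight-chord cut creates corners and hence infinite energy, so it is useless; and a modification supported on a tiny neighbourhood of a bad point cannot help either, because there the curve is essentially a circular arc and circular arcs are rigid under fixed endpoints, tangents and turning. One is therefore forced to work on an arc \emph{enlarged by flanks of the correct size} $\sim\psi/C$ so that a genuinely flatter (radius $\sim 3/C$) arc has room in the tangent wedge, and then to verify \emph{simultaneously} that the modified body is convex, that the junctions are $C^1$ so the $H^2$ penalty stays finite, that the elastica decreases by a controlled amount, and that the geometric perturbation -- hence the average-distance cost -- is small; making these four requirements compatible, and pinning down the geometry of the wedge quantitatively in terms of $\psi$ and $C$, is the heart of the argument. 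A secondary nuisance is that $|\gm''|$ is a priori only $L^2$, so the choice of $[a,b]$ and all pointwise statements must be run through the turning function $\th$ and its Lebesgue points; the constant $C_2$ is the bookkeeping for this and for the small-perimeter degenerate case.
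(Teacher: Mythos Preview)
Your competitor is genuinely different from the paper's, and this difference is exactly why your bookkeeping does not land on the stated constant. The paper does \emph{not} cut and round a local arc. It builds a global competitor $\Omega_\vep$ by (i) applying a homothety of ratio $2$ to the tiny bad arc $\gm([0,\vep])$ on which $|\gm'(\vep)-\gm'(0)|=M\vep$, (ii) translating the next stretch $[\vep,t_-]$, (iii) adding an explicit smooth vector field $\gm(\vep)_y\,v$ on a stretch $[t_-,t_+]$ whose length is bounded \emph{below} independently of $\vep$, and (iv) an affine rescaling in one coordinate on $[t_+,\H^1(\pd\Omega)]$. The homothety is the engine: it divides $\int|\gm''|^2$ on the bad arc exactly by $2$, so the elastica gain is precisely $\tfrac12\int_0^\vep|\gm''|^2\ge \tfrac{M^2}{2}\vep$. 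The constant $C_2$ is \emph{not} a generic error sponge; it is exactly $4\|v'\|_{L^\infty}\E(\Omega)/\la+2\|v''\|_{L^\infty}\sqrt{\H^1(\pd\Omega)\E(\Omega)/\la}$ for that specific field $v$, and this is where the $32(\la^{-1}+\pi)^2+32\sqrt{2C_1\pi}(\la^{-1}+\pi)^{5/2}$ comes from. Balancing the order-$\vep$ terms gives $M^2/2\le \la^{-1}p(C_1+1)^{p-1}\pi C_1^2/2+C_2$, which \emph{is} the stated $C$.

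Your balance, by contrast, is $\la C\psi\lesssim p(C_1+1)^{p-1}\pi C_1^2\,\psi^2/C$, and the contradiction is meant to come from $\psi\le 2\pi$. But plug in the stated $C$: you obtain
\[
\psi\;\gtrsim\;\frac{\la C^2}{p(C_1+1)^{p-1}\pi C_1^2}\;=\;1+\frac{2\la C_2}{p(C_1+1)^{p-1}\pi C_1^2},
\]
and since $C_2$ grows only like $\sqrt{C_1}$ while the denominator grows like $C_1^{p+1}$, the right-hand side is essentially $1$ --- nowhere near $2\pi$. So your final step does not close with the constant in the theorem. You cannot redeclare $C_2$ to mean ``whatever absorbs my implicit constants'': it is a fixed number given in the statement, tailored to a construction you are not using.

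There are also real gaps upstream. The flank contributions $\int_{a-\rho}^a|\gm''|^2+\int_b^{b+\rho}|\gm''|^2$ are not controlled by your hypotheses: the only global bound is $\int_{\pd\Omega}|\gm''|^2\le \E(\Omega)/\la$, which can dwarf $C\psi$ when $\psi$ is small, and the ``elementary adjustment of $[a,b]$'' you invoke does not remedy this. The wedge geometry (that a radius-$3/C$ arc fits, that $d_\H(\pd\Omega,\pd\tilde\Omega)\lesssim\psi^2/C$) is asserted for small $\psi$ and the non-small-$\psi$ regime is again delegated to $C_2$, which is unavailable. Your outline could plausibly be completed to give $C^{1,1}$ with \emph{some} explicit constant depending on $p,\la$, but not the one in the theorem; and even for that weaker goal, the flank estimate and the large-$\psi$ case need honest arguments rather than absorption.
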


The rest of the paper is organized as follows: Section \ref{sec:estimate} is dedicated to proving some auxiliary estimates on elements of minimizing sequences.  Existence of minimizers is shown in Section \ref{sec:existence}, while $C^{1,1}$-regularity is  established in Section \ref{sec:proof}. Finally, in Section \ref{sec:conclusion}, we explore several future directions to further our understanding of the penalized average distance problem. Technical results concerning properties of convex
sets used in this paper will be presented in the Appendix.

\section{Estimates}\label{sec:estimate}

This section is dedicated to establishing quantitative bounds on the diameter and the area of any domain associated with the minimizing sequences of $\E$. In particular, the main result is Lemma \ref{p}, which provides a uniform upper bound on the diameter, crucial to the proof of the
existence of minimizers.

\textbf{Remark:} It is worth noting that, due to \eqref{K2}, any set $\Omega$ whose boundary is not $C^1$-regular will have infinite energy, since a corner  on $\partial\Omega$ with a discontinuous tangent  corresponds to the Dirac measure in the curvature measure $\kappa_{\partial\Omega}$. Thus we can restrict ourselves to $C^1$-regular sets.

\begin{lemma}\label{diam}
Given $p\geq 1$, $\la>0$, for any $\Omega \in \cal A$, it holds that
\begin{equation}\label{diam-low}
\diam(\Omega) \geq \frac{4\pi\la}{ \E(\Omega)}.
\end{equation}
Then, for any
 minimizing sequence $\Omega_n\sse \cal A $ (that is, $\E(\Omega_n)\to \inf_{ \cal A } \E$),  it holds that
\begin{equation}\label{diam-l}
\diam(\Omega_n) \geq \frac{2\pi\la}{1+\pi\la }
\end{equation}
for all sufficiently large $n$.
\end{lemma}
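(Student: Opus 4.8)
The plan is to derive both inequalities from one lower estimate on the Euler elastica term of a convex curve, combined with an elementary upper estimate for $\inf_{\mathcal A}\E$ obtained by testing the energy on a disk. Since $\E(\Omega)\ge\la\int_{\pd\Omega}\kappa_{\pd\Omega}^2\,\d\H_{\llcorner\pd\Omega}^1$ for every $\Omega$, inequality \eqref{diam-low} reduces to showing that $\int_{\pd\Omega}\kappa_{\pd\Omega}^2\,\d\H_{\llcorner\pd\Omega}^1\ge 4\pi/\diam(\Omega)$, and \eqref{diam-l} will then follow from \eqref{diam-low} as soon as we know that $\inf_{\mathcal A}\E<2(1+\pi\la)$.

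For \eqref{diam-low} I would argue as follows. If $\E(\Omega)=+\8$ there is nothing to prove, so assume $\E(\Omega)<\8$; by \eqref{K2} and the Remark above, $\pd\Omega$ then carries an arc-length parametrization $\gm\in H^2$, so (as $H^2$ of an interval embeds into $C^1$) $\gm$ is a $C^1$ simple closed convex curve whose turning angle increases monotonically from $0$ to $2\pi$. Consequently $\kappa_{\pd\Omega}\ge 0$ and $\int_0^{\H^1(\pd\Omega)}|\gm''|\,\d s=\int_0^{\H^1(\pd\Omega)}\kappa_{\pd\Omega}\,\d s=2\pi$. Cauchy--Schwarz gives
\begin{equation*}
4\pi^2=\Big(\int_0^{\H^1(\pd\Omega)}|\gm''|\,\d s\Big)^2\le\H^1(\pd\Omega)\int_0^{\H^1(\pd\Omega)}|\gm''|^2\,\d s,
\end{equation*}
and combined with the classical bound $\H^1(\pd\Omega)\le\pi\,\diam(\Omega)$ for planar convex bodies (Cauchy's formula: the perimeter is $\pi$ times the mean width, and the mean width is at most the diameter) this yields $\int_{\pd\Omega}\kappa_{\pd\Omega}^2\,\d\H_{\llcorner\pd\Omega}^1\ge 4\pi/\diam(\Omega)$, hence $\E(\Omega)\ge 4\pi\la/\diam(\Omega)$, i.e.\ \eqref{diam-low}.

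To obtain \eqref{diam-l} I would test $\E$ on the unit disk $B_1\in\mathcal A$. A direct computation in polar coordinates gives $\int_{B_1}\dist^p(x,\pd B_1)\,\d x=\frac{2\pi}{(p+1)(p+2)}$ and $\la\int_{\pd B_1}\kappa^2\,\d\H^1=2\pi\la$, so
\begin{equation*}
\inf_{\mathcal A}\E\le\E(B_1)=\frac{2\pi}{(p+1)(p+2)}+2\pi\la .
\end{equation*}
Since $p\ge1$ forces $(p+1)(p+2)\ge 6>\pi$, the first summand is strictly below $2$, so $\inf_{\mathcal A}\E<2(1+\pi\la)$. Hence, if $\E(\Omega_n)\to\inf_{\mathcal A}\E$, then $\E(\Omega_n)<2(1+\pi\la)$ for all sufficiently large $n$, and \eqref{diam-low} gives
\begin{equation*}
\diam(\Omega_n)\ge\frac{4\pi\la}{\E(\Omega_n)}>\frac{4\pi\la}{2(1+\pi\la)}=\frac{2\pi\la}{1+\pi\la},
\end{equation*}
which is \eqref{diam-l}.

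The disk integrals and the arithmetic are routine. The step I expect to require the most care is justifying the two geometric inputs at the regularity available here: that a closed convex curve with only an $H^2$ arc-length parametrization has total curvature exactly $2\pi$ with sign-definite curvature, and that the perimeter of a planar convex body is at most $\pi$ times its diameter. I anticipate no genuine difficulty — the first follows from the monotone turning-angle description of convex curves, which is legitimate since $H^2\hookrightarrow C^1$, and the second is a textbook consequence of Cauchy's perimeter formula — and both belong with the convexity facts to be recorded in the Appendix.
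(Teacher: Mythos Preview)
Your proposal is correct and follows essentially the same route as the paper: H\"older/Cauchy--Schwarz applied to the total curvature $\int|\kappa|=2\pi$ combined with $\H^1(\pd\Omega)\le\pi\,\diam(\Omega)$ for \eqref{diam-low}, and testing on the unit disk to bound $\inf_{\A}\E$ for \eqref{diam-l}. The only cosmetic differences are that the paper invokes perimeter monotonicity (its Lemma~\ref{perimeter vs diameter convex sets}) rather than Cauchy's formula for the perimeter--diameter bound, and it uses the cruder estimate $\int_{B_1}\dist^p\le\pi/3$ instead of your exact value $\frac{2\pi}{(p+1)(p+2)}$.
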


\begin{proof}
Consider an arbitrary $\Omega \in \cal{A}$. Choose $x,y\in \pd \Omega$ such that
$|x-y|=\diam (\Omega)$. Note that
$\Omega \sse B(x,\diam(\Omega))$, hence due to the convexity of $\Omega$
(see Lemma \ref{perimeter vs diameter convex sets}), it follows that
\begin{equation*}
\H^1(\pd \Omega)\leq \pi \diam (\Omega).
\end{equation*}
As $\pd \Omega$ is a closed convex curve with winding number equal to $1$, and our restriction on the curvature term ensures that the boundary
is $H^2$ regular,
it follows that 
$$\int_{\pd \Omega} |\kappa_{\pd \Omega}|\d\H_{\llcorner \pd \Omega}^1 =2\pi,$$
and by H\"older's inequality it holds that
\begin{equation*}
\E(\Omega)\geq 
\la\int_{\pd \Omega} \kappa_{\pd \Omega}^2\d\H_{\llcorner \pd \Omega}^1 \geq \frac{4\pi^2\la}{\H^1(\pd \Omega)}  \geq \frac{4\pi\la}{ \diam(\Omega)},
\end{equation*}
hence \eqref{diam-low}.

\medskip

To prove \eqref{diam-l}, we show first that $\inf_{ \cal A} \E<+\8$. Consider the unit ball $B:=B\big((0,0),1\big)$, and note that 
\begin{align} \label{inf E}
\inf_{  \cal A } \E &\leq \E(B)=\int_B \dist^p(x,\pd B)\d x+\la \int_{\pd B}\kappa_{\pd B}^2\d\H^1_{\llcorner \pd B}
\leq \frac{\pi}{3}+2\pi\la.
\end{align}
Let $\Omega_n\sse   \cal A$ be an arbitrary minimizing sequence. Clearly, since $\E(\Omega_n)\to \inf_{ \cal A } \E$, for all sufficiently large $n$,
it holds that 
\begin{eqnarray} \label{inf E2}
\E(\Omega_n)\leq \inf_{\bar{\cal A}}\E+2 - \frac{\pi}{3}  \overset{\eqref{inf E}}\leq 2+2\pi\la,
\end{eqnarray}
and \eqref{diam-low} gives
$\diam(\Omega_n) \geq \dfrac{2\pi\la}{ 1+\pi\la}$, hence \eqref{diam-l}.

\end{proof}

In the following, we will use the definition of the \emph{total variation} of a function $u$, which is defined as follows.
Let $\Omega \subset \mathbb{R}^n$ be an open set and let $u \in L^1(\Omega)$, then
\begin{eqnarray*}
|| u||_{TV} = \sup \left \{ \int_{\Omega} u \; \text{div}  \phi \;  dx :\phi \in C_{c}^{1} (\Omega; \mathbb{R}^n), || \phi ||_{L^{\infty}(\Omega) } \leq 1 \right \},
\end{eqnarray*}
and
\begin{eqnarray*}
||u||_{BV} = ||u||_{L^1} + ||u||_{TV}.
\end{eqnarray*}

\begin{lemma}\label{a}
Given $p\geq 1$, $\la>0$, and $\Omega \in { \cal A} $, it holds that
\begin{equation}\label{a-low}
\H^2(\Omega)\geq \frac{\pi\la^2}{2\E(\Omega)^2}.
\end{equation}
Moreover, given a minimizing sequence $\Omega_n\sse { \cal A}$ (that is, $\E(\Omega_n)\to \inf_{ \cal A } \E$), 
we have
\begin{equation}\label{a-l}
\H^2(\Omega_n) \geq  \frac{\pi\la^2}{8(1+\pi\la)^2} 
%=:a_{\la}
\end{equation}
for all sufficiently large $n$.
%Finally, any minimizer (if they exist at all) satisfies estimate \eqref{a-l} too.
\end{lemma}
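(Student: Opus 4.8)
The plan is to mirror the structure of Lemma~\ref{diam}: first establish the pointwise bound \eqref{a-low} valid for every $\Omega\in\cal A$, then combine it with the energy bound \eqref{inf E2} (which holds for large $n$ along any minimizing sequence) to deduce \eqref{a-l}. For the first step, fix $\Omega\in\cal A$; we may assume $\E(\Omega)<+\infty$, since otherwise \eqref{a-low} is trivial, and in particular $\pd\Omega$ is $H^2$-regular with $\H^1(\pd\Omega)>0$. I would relate the area to the diameter via an isodiametric- or width-type inequality for convex planar sets, combined with the already-proven diameter lower bound \eqref{diam-low}. Concretely, a convex body $\Omega$ with $\H^1(\pd\Omega)=L$ and bounded squared curvature cannot be arbitrarily thin: since $\int_{\pd\Omega}|\kappa|\,\d\H^1=2\pi$ and (by Lemma~\ref{perimeter vs diameter convex sets} and the argument in Lemma~\ref{diam}) $L\le\pi\diam(\Omega)$, one gets $\diam(\Omega)\ge 4\pi\la/\E(\Omega)$. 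What remains is to bound $\H^2(\Omega)$ from below by a multiple of $\diam(\Omega)^2$ — but this is \emph{false} for general thin convex sets, so the curvature penalty must be used again: a closed convex $H^2$ curve whose integrated squared curvature is $\le \E(\Omega)/\la$ has minimal width bounded below in terms of its diameter. I expect the cleanest route is: pick $x,y\in\pd\Omega$ realizing the diameter $D=\diam(\Omega)$, look at the two boundary arcs joining them; on at least one arc the curve must bulge away from the segment $[x,y]$ by a definite amount controlled by $\int\kappa^2$, giving an inscribed triangle (or region) of area $\gtrsim D^2\cdot(\text{something})$, and then feed in $D\ge 4\pi\la/\E(\Omega)$.

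A slicker alternative, which I would actually try first, avoids the width estimate entirely: use the reverse isoperimetric-type chain already available. By convexity $\Omega\supseteq$ the segment of length $D$, and more usefully $\Omega$ contains a ball of radius $r=\H^2(\Omega)/\H^1(\pd\Omega)$ is false in general too — so instead bound below using that $\Omega$ contains the convex hull of $\{x,y\}$ together with a boundary point off the line. The quantitative statement I want is $\H^2(\Omega)\ge c\,\H^1(\pd\Omega)^2/(\int_{\pd\Omega}\kappa^2\,\d\H^1)$, which by $\int\kappa^2\ge 4\pi^2/L$ (Hölder, as in Lemma~\ref{diam}) would give $\H^2(\Omega)\ge c\,L^2/(\E(\Omega)/\la)\cdot$ and then combining with $L\ge \diam(\Omega)\ge 4\pi\la/\E(\Omega)$ yields $\H^2(\Omega)\ge \pi\la^2/(2\E(\Omega)^2)$ after tracking constants. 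The geometric input is: a convex curve that closes up and has total turning $2\pi$ with squared-curvature budget $M$ encloses area at least $\sim L^2/M$; this follows because on a sub-arc where the curvature is small the curve is nearly straight and long, forcing the body to be genuinely two-dimensional, and one quantifies the enclosed area by an explicit inscribed region (e.g. a thin slab around a near-straight sub-arc of length $\sim L^2/M$).

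For the second step \eqref{a-l}, I simply substitute: for all sufficiently large $n$ we have $\E(\Omega_n)\le 2+2\pi\la$ by \eqref{inf E2}, hence by \eqref{a-low}
\[
\H^2(\Omega_n)\ \ge\ \frac{\pi\la^2}{2\,\E(\Omega_n)^2}\ \ge\ \frac{\pi\la^2}{2\,(2+2\pi\la)^2}\ =\ \frac{\pi\la^2}{8\,(1+\pi\la)^2},
\]
which is exactly \eqref{a-l}. The main obstacle is the geometric lemma underlying \eqref{a-low}: one must show that the squared-curvature bound alone (no a priori $C^1$-smallness, only $H^2$) prevents a convex body of perimeter $L$ from having area much smaller than $L^2$ divided by the curvature budget, and pin down the numerical constant so that the final bound reads precisely $\pi\la^2/(2\E(\Omega)^2)$. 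I would isolate that as a self-contained claim about convex $H^2$ curves — most likely proved by choosing a sub-arc on which $|\gm''|$ has controlled $L^2$ mass, showing it deviates from its chord by a controlled amount, and computing the area of the resulting lens-shaped inscribed region — and defer the purely convex-geometry parts to the Appendix alongside Lemma~\ref{perimeter vs diameter convex sets}.
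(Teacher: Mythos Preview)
Your overall plan---prove \eqref{a-low} first, then deduce \eqref{a-l} via \eqref{inf E2}---matches the paper exactly, and your computation for the second step is correct.

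For \eqref{a-low}, your first sketch (diameter endpoints, bulge, inscribed triangle) is precisely the paper's route, but you are missing the concrete mechanism that makes the constants come out. The paper does this: at a diameter endpoint $\bar x$ the tangent $\gm'(0)$ is \emph{perpendicular} to the diameter (an easy maximality argument), so after a rotation $\gm'(0)=(0,1)$. Set $t_1:=\inf\{t:\gm'(t)_y=1/2\}$. H\"older gives
\[
\frac{1}{4t_1}\le \frac{|\gm'(t_1)_y-\gm'(0)_y|^2}{t_1}\le \int_{\pd\Omega}\kappa_{\pd\Omega}^2\,\d\H^1_{\llcorner\pd\Omega}\le \frac{\E(\Omega)}{\la},
\]
so $t_1\ge \la/(4\E(\Omega))$. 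Since $\gm'(t)_y\ge 1/2$ on $[0,t_1]$, the point $\gm(t_1)$ sits at height $\ge t_1/2$ above the diameter, and convexity gives the inscribed triangle $\triangle\bar x\,\gm(t_1)\,\bar y$ of area $\ge \diam(\Omega)\,t_1/4$. Repeating on the other side and combining with \eqref{diam-low} yields \eqref{a-low} on the nose. So the ``bulge'' is not produced on an arbitrary sub-arc with small $L^2$ curvature mass; it is anchored at a diameter endpoint where the tangent direction is known, and the height is controlled by how long it takes the tangent to rotate by a fixed amount.

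Your ``slicker alternative'' has a genuine error. The proposed geometric inequality $\H^2(\Omega)\ge c\,L^2/\int\kappa^2$ is false already for circles: a disk of radius $r$ has area $\pi r^2$ but $L^2/\int\kappa^2=(2\pi r)^2/(2\pi/r)=2\pi r^3$, so no uniform constant $c>0$ works. Even granting the inequality, your chain does not produce the right power: from $\H^2(\Omega)\ge cL^2/(\E/\la)$ and $L\ge 4\pi\la/\E$ you get $\H^2(\Omega)\ge 16\pi^2 c\,\la^3/\E^3$, not $\la^2/\E^2$. The correct intermediate estimate (which the paper effectively proves) is $\H^2(\Omega)\ge c\,\diam(\Omega)/\int\kappa^2$, linear rather than quadratic in the length scale; feeding in \eqref{diam-low} then gives the stated $\pi\la^2/(2\E(\Omega)^2)$.
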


To simplify notations, for future reference, given a point $z\in \R^2$,  we let
 $z_x$ (resp. $z_y$) denote the $x$ (resp. $y$) coordinate of $z$.
And given points $x,y\in \R^2$, 
we denote by
$$\lb x,y\rb:=\{(1-s)x+sy:s\in[0,1]\}$$
the line segment between $x$ and $y$. 

\begin{proof}
Consider an arbitrary $\Omega \in \cal{A}$.
Choose arbitrary points $\-x,\-y\in \pd \Omega$ such that $|\-x-\-y|=\diam(\Omega)$. Endow $\R^2$
with a Cartesian coordinate system, with the origin at the midpoint $(\-x+\-y)/2$, such that
$$\-x=(-\diam(\Omega)/2,0), \quad \-y=(\diam(\Omega)/2,0).$$
Let $\gm:[0,\H^1(\pd \Omega)]\lra \pd \Omega$ be an arc-length parameterization, and without loss of generality
we impose $\gm(0)=\-x$.
We make and prove the following claims.
\begin{itemize}
\item  $\gm'(0)_x= 0$.\\
Assume the opposite, i.e., $\gm'(0)_x\neq 0$. For $|\vep|\ll 1$, since $\gm$ is $C^1$-regular, 
%\tcr{note that
%up to this point, the $C^1$-regularity has not been discussed yet}
 it holds that
$\gm(\vep)=\-x+\vep\gm'(0)+v_\vep$, for some vector $v_\vep$ with $|v_\vep|=o(\vep)$ as $\vep\to 0$. Since
$\-y-\-x$ is parallel to the $x$-axis, it follows that
\begin{equation*}
\frac\d{\d t}|\-y-\gm(t)|\bigg|_{t=0} = \lim_{\vep\to 0}\frac{|\-y-(\-x+\vep\gm'(0))|-|\-y-\-x|+o(\vep)}\vep
=\gm'(0)_x\neq 0,
\end{equation*}
hence $t=0$ is not a maximum for $t\mapsto |\-y-\gm(t)|$. This contradicts 
$$|\-y-\-x|=\diam(\Omega)=\max_{x\in \pd \Omega}|\-y-x|,$$
 and the claim is proven.
\end{itemize}
\begin{center}
\begin{figure}[h]
\centering
\includegraphics[scale=0.7]{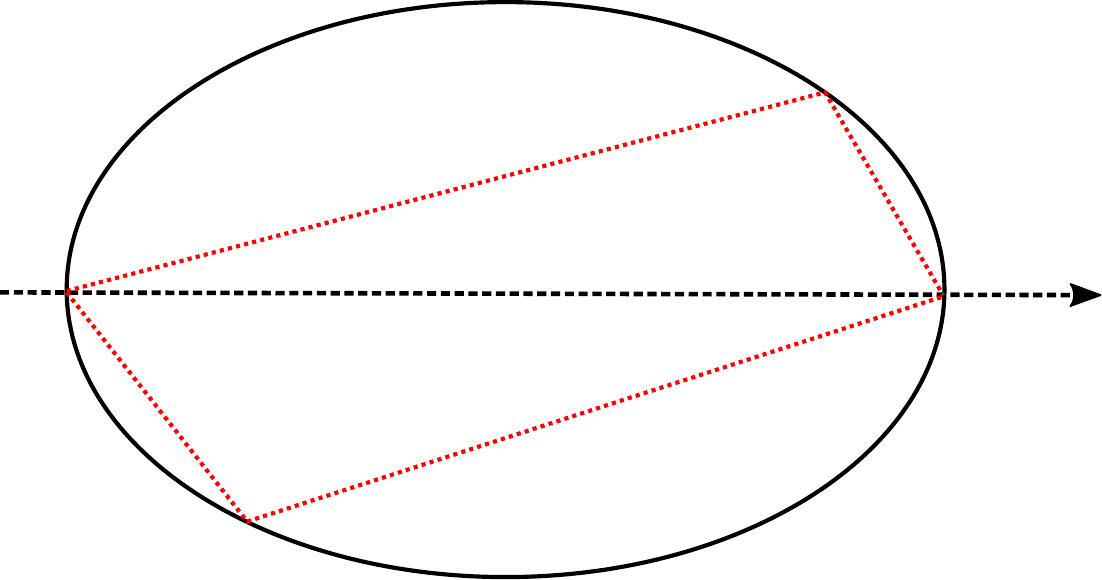}
\put(-217,48){$\bar{x}$}
\put(-30,48){$\bar{y}$}
\put(-62,105){$\gm(t_1)$}
\put(-198,2){$\gm(t_2)$}
\put(-198,105){\large{$\pd \Omega$}}
\put(0,48){$x$-axis}
\footnotesize{\caption{Schematic representation of the construction.}}
\end{figure}
\end{center}
Without loss of generality, we can further impose $\gm'(0)=(0,1)$.
Consider the region $ \Omega \cap \{y\geq 0\}$. Set
\begin{equation*}
t_1:=\inf\{t: \gm'(t)_y=1/2\},
\end{equation*}
where $\gm(t)_y$ denotes the $y$-coordinate of $\gm(t)$. By H\"older's inequality, it follows that
\begin{align}
 \frac{1}{4 t_1} \leq \frac{\|\gm_y'\|^2_{TV(0,t_1)}}{t_1}\leq \int_{\pd \Omega}\kappa_{\pd \Omega}^2\d\H^1_{\llcorner \pd \Omega}\leq \frac{\E(\Omega)}\la
\Lra t_1\geq \frac{\la}{4\E(\Omega)}.\label{inf t0}
\end{align}
Since $1/2\leq \gm_y'(t)\leq 1$ for any $t\in[0,t_1]$, it holds that $\gm(t_1)_y\geq t_1/2$. Due to the convexity
of $ \Omega \cap \{y\geq 0\}$, 
both line segments 
 $\lb \gm(t_1), \- x \rb $ 
and $\lb\gm(t_1),\-y\rb$ 
are contained in $\Omega$,
hence $\triangle \-x\gm(t_1)\-y\sse \Omega$. By construction, the triangle $\triangle \-x\gm(t_1)\-y$ has base 
$\lb \-x,\-y\rb$ and height $\lb \gm(t_1) , (\gm(t_1)_x,0)\rb $, hence
\begin{equation}\label{tr+}
\H^2(\triangle \-x\gm(t_1)\-y)=\frac12| \-x-\-y|\cdot|\gm(t_1)_y|\geq \frac{\diam(\Omega)t_1}{4}.
\end{equation}
By repeating the same construction for $ \Omega \cap \{y\leq 0\}$, we get the existence of $t_2 \geq \frac{\la}{4\E(\Omega)}$ such that the triangle
$\triangle \-x\gm(t_2)\-y$ satisfies
\begin{equation}\label{tr-}
\H^2(\triangle \-x\gm(t_2)\-y)=\frac12| \-x-\-y|\cdot|\gm(t_2)_y|\geq \frac{\diam(\Omega)t_2}4.
\end{equation}
Combining \eqref{tr+} and \eqref{tr-} gives
\begin{equation*}
\H^2(\Omega)\geq \frac{\diam(\Omega)t_1}{4}+\frac{\diam(\Omega)t_2}{4}\overset{\eqref{diam-low},\eqref{inf t0}}\geq \frac{\pi\la^2}{2 \E(\Omega)^2},
\end{equation*}
hence \eqref{a-low}.

To prove \eqref{a-l}, note that the above arguments give
\begin{equation*}
\H^2(\Omega_n)\overset{\eqref{a-low}}\geq \frac{\pi\la^2}{2 \E(\Omega_n)^2}
\overset{\eqref{inf E2}}\geq \frac{\pi\la^2}{8 (1+\pi\la)^2},
\end{equation*}
for any sufficiently large $n$, and proof of \eqref{a-l} is complete.

\end{proof}

\begin{lemma}\label{p}
Given $p\geq 1$, $\la>0$, for any $\Omega \in  \cal A $ it holds that
\begin{equation}\label{diam-high}
\diam(\Omega) \leq  (p+1)(p+2) \left   (\frac{24}{\la} \right)^{p+1} \E(\Omega)^{p+2}  .
\end{equation}
Moreover, for any minimizing sequence $\Omega_n\sse \cal A $ (that is, $\E(\Omega_n)\to \inf_{ \cal A } \E$) it holds that
\begin{equation}\label{diam-h}
\diam(\Omega_n) \leq   C_1
\end{equation}
for all sufficiently large $n$, with $C_1$ defined in \eqref{C_1}.

\end{lemma}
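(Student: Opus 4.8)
The plan is to bound $\int_\Omega \dist^p(x,\pd\Omega)\d x$ from below by a quantity involving \emph{both} $\diam(\Omega)$ and the transversal width of $\Omega$ near a diameter, reusing almost verbatim the geometric construction from the proof of Lemma~\ref{a}. We may assume $\E(\Omega)<+\8$, since otherwise $\E(\Omega)>0$ forces the right-hand side of \eqref{diam-high} to equal $+\8$ and there is nothing to prove; by \eqref{K2} this means $\pd\Omega$ carries an $H^2$ arc-length parametrization $\gm$. Set $D:=\diam(\Omega)$, choose $\bar x,\bar y\in\pd\Omega$ with $|\bar x-\bar y|=D$, and fix the Cartesian coordinates exactly as in the proof of Lemma~\ref{a}: origin at $(\bar x+\bar y)/2$, $\bar x=(-D/2,0)$, $\bar y=(D/2,0)$, $\gm(0)=\bar x$, $\gm'(0)=(0,1)$. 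With $t_1:=\inf\{t:\gm'(t)_y=1/2\}$, inequality \eqref{inf t0} gives $t_1\geq \la/(4\E(\Omega))$, and (as already shown there) $\gm(t_1)_y\geq t_1/2$; writing $z:=\gm(t_1)$ and $h:=z_y$ we obtain $h\geq \la/(8\,\E(\Omega))$.

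The key step is to localize the distance functional to the triangle $T:=\triangle\,\bar x\,z\,\bar y$. By convexity of $\Omega$ we have $T\sse\Omega$, hence $\R^2\setminus\Omega\sse\R^2\setminus T$ and $\dist(x,\pd\Omega)\geq \dist(x,\pd T)$ for every $x\in T$, so that $\int_\Omega\dist^p(x,\pd\Omega)\d x\geq \int_T\dist^p(x,\pd T)\d x$. To evaluate the latter I would invoke the elementary fact (naturally placed in the Appendix) that, for a triangle $T$ of inradius $r_T$ and for $0<t<r_T$, the super-level set $\{x\in T:\dist(x,\pd T)>t\}$ is the homothetic copy of $T$ centred at its incentre with ratio $1-t/r_T$, and is empty for $t\geq r_T$. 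The layer-cake formula then gives the exact value
\[
\int_T\dist^p(x,\pd T)\d x=\Area(T)\,r_T^{p}\int_0^1 p\,s^{p-1}(1-s)^2\d s=\frac{2\,\Area(T)\,r_T^{p}}{(p+1)(p+2)}.
\]
Now $\Area(T)=\tfrac12 Dh$ since $T$ has base $\lb\bar x,\bar y\rb$ and height $h$ (cf.\ \eqref{tr+}), while $z\in\Omega$ forces the two non-base sides of $T$ to have length $\leq D$, so the semiperimeter of $T$ is at most $3D/2$ and therefore $r_T=\Area(T)/(\text{semiperimeter})\geq h/3$. Combining,
\[
\int_\Omega\dist^p(x,\pd\Omega)\d x\;\geq\;\frac{D\,h^{p+1}}{3^{p}(p+1)(p+2)}\;\geq\;\frac{D}{3^{p}(p+1)(p+2)}\Big(\frac{\la}{8\,\E(\Omega)}\Big)^{p+1}.
\]

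Finally, bounding the left-hand side by $\E(\Omega)$ and rearranging yields $D\leq 3^{p}8^{p+1}(p+1)(p+2)\,\E(\Omega)^{p+2}\la^{-(p+1)}$; since $3^{p}8^{p+1}=8\cdot 24^{p}\leq 24^{p+1}$, this is exactly \eqref{diam-high}. Estimate \eqref{diam-h} then follows immediately by substituting the uniform bound $\E(\Omega_n)\leq 2+2\pi\la=2(1+\pi\la)$, valid for all large $n$ by \eqref{inf E2}, which reproduces $C_1$ as in \eqref{C_1}. The one point requiring care is the homothety description of the inner parallel sets of $T$, which is what makes the layer-cake integral exact; everything else is bookkeeping. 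It is worth stressing why one localizes to the long thin triangle $T$ rather than to an inscribed disc: a disc only detects the scale $h\sim\la/\E(\Omega)$ and would lose the factor $D$ entirely, whereas $T$ retains the full diameter, which is precisely what converts a lower bound on $\E(\Omega)$ into an upper bound on $\diam(\Omega)$.
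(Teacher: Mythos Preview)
Your proof is correct and follows essentially the same route as the paper: the same diametral triangle $T=\triangle\bar x z\bar y$, the same height bound $h\ge\la/(8\E(\Omega))$ from the curvature estimate, and the same inradius bound $r_T\ge h/3$ from the semiperimeter. The only variation is in evaluating $\int_T\dist^p(\cdot,\pd T)$: the paper restricts to the sub-triangle $\triangle\bar x q_c\bar y$ below the incentre (where $\dist(\cdot,\pd T)=z_y$) and integrates directly to get $r_T^{p+1}D/((p+1)(p+2))$, whereas you integrate over all of $T$ via layer-cake and the inner-parallel homothety, obtaining the slightly sharper $Dh\,r_T^{p}/((p+1)(p+2))$ before discarding the extra factor of $3$ to match \eqref{diam-high}.
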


\begin{proof}
Similar to the proof of Lemma \ref{a}, consider an arbitrary $\Omega \in \cal A$, 
and choose arbitrary points $\-x,\-y\in \pd \Omega$ such that $|\-x-\-y|=\diam(\Omega)$. Endow $\R^2$ again
with a Cartesian coordinate system, with the origin at the midpoint $(\-x+\-y)/2$, such that
$$\-x=(-\diam(\Omega)/2,0), \quad  \-y=(\diam(\Omega)/2,0).$$ 

In the proof of Lemma \ref{a} we have shown the existence of a point $q\in \pd \Omega$ (e.g., the point $\gm(t_1)$) such that 
\begin{equation}\label{inf qy}
\triangle \-xq\-y\sse \Omega,\qquad |q_y|\geq \frac{\la}{8\E(\Omega)}.
\end{equation}
\begin{center}
\begin{figure}[h]
\centering
\includegraphics[scale=0.7]{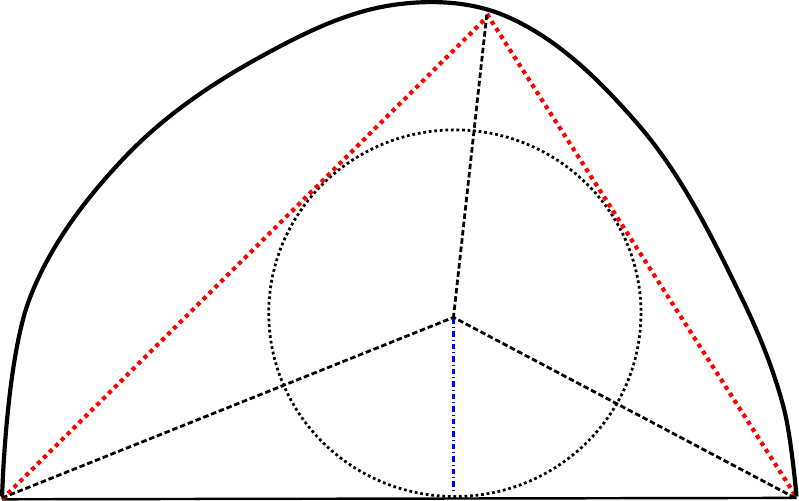}
\put(-168,-5){$\bar{x}$}
\put(1,-5){$\bar{y}$}
\put(-60,105){$q$}
\put(-78,-10){$q_c^\bot$}
\put(-80,40){$q_c$}
\put(-155,68){\large{$\pd \Omega$}}
\footnotesize{\caption{Schematic representation of the construction. Here represented only the region $ \Omega \cap \{y\ge 0\}$. Notice that the sides of $\triangle \bar{x} q \bar{y}$ are tangents of the incircle.}}
\end{figure}
\end{center}
 Let $q_c$ be the incenter 
 of $\triangle \-xq\-y$, and note that for any $z\in  \triangle \-xq_c\-y$
 it holds that 
 $$\dist(z,\pd (\triangle \-xq\-y))=\dist(z,\lb\-x,\-y \rb).$$
 Denote by $q_c^\bot\in \lb\-x,\-y \rb$ the projection of $q_c$ on $\lb\-x,\-y \rb$, and
set
\begin{equation*}
D_1:=|\-x-q_c^\bot|,\qquad D_2:=|\-y-q_c^\bot|,\qquad r:=|q_c-q_c^\bot|.
\end{equation*}
Clearly, $D_1+D_2=\diam(\Omega)$, and
direct computation gives
\begin{align}
\int_{\Omega} \dist^p(z,\pd \Omega)\d z&\geq \int_{\triangle \-x q_c \-y} \dist^p(z,\pd \Omega)\d z
\geq\int_{\triangle \-x q_c \-y} \dist^p(z,\lb\-x,\-y \rb)\d z\notag\\
&=\int_{\triangle \-x q_c \-y} z_y^p\d z
=\int_0^{D_1}\int_0^{\frac{r}{D_1}x} y^p \d y\d x+\int_0^{D_2}\int_0^{\frac{r}{D_2}x} y^p \d y\d x\notag\\
%&=\int_0^{D_1}\frac{r/D_1}{p+1}x^{p+1} \d x+\int_0^{D_2}\frac{r/D_2}{p+1}x^{p+1} \d x\\
&=\frac{r^{p+1} (D_1+D_2)}{(p+1)(p+2)} = \frac{r^{p+1} \diam(\Omega) }{(p+1)(p+2)}. \label{rD}
\end{align}
To estimate $r$, note that the sides $\lb \-x,q_c\rb$ and $\lb \-y,q_c\rb$ satisfy
$$|\-x-\-y|=\diam(\Omega)\geq \max \{ |\-x-q_c|, |\-y-q_c|\}.$$
Since $q_c$ is the incenter of $\triangle \bar{x} q \bar{y}$ and the sides of $\triangle \bar{x} q \bar{y}$ are tangents of the incircle, we have
\begin{equation*}
\H^2(\triangle \-x q \-y) =\frac12\diam(\Omega)|q_y|=\frac12 (\diam(\Omega)+|\-x-q_c|+ |\-y-q_c|)r \tcr{.}
\end{equation*}
Thus, we infer 
\begin{equation*}
r\geq \frac{|q_y|}{3}\overset{\eqref{inf qy}}\geq  \frac{\la}{24\E(\Omega)}.
\end{equation*}
Plugging into \eqref{rD} gives
\begin{equation*}
 \left ( \frac{\la}{24\E(\Omega)} \right )^{p+1}  \cdot \frac{  \diam(\Omega) }{ (p+1)(p+2)}\leq \int_{\Omega} \dist^p(z,\pd \Omega)\d z\leq \E(\Omega),
\end{equation*}
hence \eqref{diam-high}.

To prove \eqref{diam-h}, note that for any minimizing sequence it holds that
\begin{equation*}
\E(\Omega_n)\overset{\eqref{inf E2}}\leq 2(1+\pi\la)\Lra
\diam(\Omega_n) \overset{\eqref{diam-high}}\leq (p+1)(p+2) \left   (\frac{24}{\la} \right)^{p+1} (2 (1+\pi \lambda))^{p+2} = C_1
\end{equation*}
for all sufficiently large $n$.

%The last part follows from the continuity of the 
%diameter with respect to the convergence in $( {\cal A},d)$,
%concluding the proof.    \tcr{\bf again, see note above}.

\end{proof}

\section{Proof of existence} \label{sec:existence}

%One of the key difficulties in our analysis is
% that ${\cal A}$ is not complete with respect to $d$, which makes
%unclear if $\E$ admits minimizers in ${\cal A}$.
%Let
%$$\bar{ {\mathcal A}}:= \text{completion of } {\cal A} \text{ with respect to } d.$$ 
%We will overcome the non-completeness of ${\cal A}$ via estimates on diameter 
%and area of 
%elements of minimizing sequences.
Set
$$\-{\cal A}:= \text{completion of } {\cal A} \text{ with respect to} \text{ the metric } d,$$ 
where $d$ is defined in \eqref{d}.

\begin{lemma}\label{conv}
Given a compact set $\Sigma \sse\R^2$, and a sequence of curves $\{\gm_k\}:[0,1]\lra \Sigma$ satisfying
$$\sup_k \|\gm_k'\|_{BV}<+\8, \qquad \sup_{k}\H^1(\gm_k([0,1]))<+\8, $$
where $\|\cdot\|_{BV}$ denotes the BV norm,
then there exists a curve $\gm:[0,1]\lra \Sigma$, such that (upon subsequence) it holds that:
\begin{enumerate}
\item $\gm_k\ra \gm$ in $C^\al$ for any $\al\in[0,1)$,
\item $\gm_k'\ra \gm'$ in $L^p$ for any $p\in[1,\8)$,
\item $\gm_k'' \rhu \gm'' $ in the space of signed Borel measures.
\end{enumerate}
\end{lemma}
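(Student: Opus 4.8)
The plan is to prove Lemma \ref{conv} via a standard compactness-and-diagonalization argument, exploiting the uniform $BV$ bound on $\gm_k'$ together with the uniform length bound. First I would observe that since $\sup_k \|\gm_k'\|_{BV} < +\8$, in particular $\sup_k \|\gm_k'\|_{L^1} < +\8$, and combined with $\sup_k \H^1(\gm_k([0,1])) = \sup_k \int_0^1 |\gm_k'| \d t < +\8$ and the boundedness of $\sm$, the family $\{\gm_k\}$ is equi-Lipschitz up to reparametrization — more precisely, each $\gm_k$ has $\|\gm_k\|_{W^{1,1}}$ bounded and the $BV$ bound on $\gm_k'$ gives uniform control that upgrades this. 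The cleanest route is: the compact embedding $BV(0,1) \hookrightarrow L^1(0,1)$ (Helly's selection theorem) yields a subsequence with $\gm_k' \ra g$ in $L^1$ and pointwise a.e., for some $g \in BV((0,1);\R^2)$. Then $\gm_k(t) = \gm_k(0) + \int_0^t \gm_k'(s)\d s$; since $\gm_k(0) \in \sm$ is bounded, pass to a further subsequence so $\gm_k(0) \ra x_0$, and define $\gm(t) := x_0 + \int_0^t g(s)\d s$. Dominated convergence (with the a.e. convergence and an integrable dominating function coming from the $BV$ compactness, or simply uniform integrability) gives $\gm_k \ra \gm$ uniformly on $[0,1]$, i.e., in $C^0$; and since $\sm$ is closed, $\gm([0,1]) \sse \sm$.

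Next I would promote these convergences to the three claimed statements. For (1): uniform convergence $\gm_k \ra \gm$ plus the uniform Lipschitz-type bound (the $\gm_k$ have uniformly bounded $W^{1,\8}$ norm after noting $\|\gm_k'\|_{L^\8} \le \|\gm_k'\|_{BV}$, or at least uniformly bounded modulus of continuity from the $BV$ bound) yields, by interpolation, $\gm_k \ra \gm$ in $C^\al$ for every $\al \in [0,1)$: equi-Hölder-$1$ families converging uniformly converge in every $C^\al$, $\al < 1$. For (2): we already have $\gm_k' \ra g = \gm'$ in $L^1$ and a.e.; combined with the uniform $L^\8$ bound $\|\gm_k'\|_{L^\8} \le \sup_k\|\gm_k'\|_{BV} < \8$, dominated convergence upgrades $L^1$ convergence to $L^q$ convergence for every $q \in [1,\8)$. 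For (3): the distributional derivatives $\gm_k'' = D(\gm_k')$ are vector-valued measures with total variation $|\gm_k''|(0,1) = \|\gm_k'\|_{TV} \le \sup_k \|\gm_k'\|_{BV} < +\8$, so by weak-$*$ compactness of bounded sequences in the dual of $C_0$ (Banach–Alaoglu), a further subsequence satisfies $\gm_k'' \rhu \mu$ for some signed (vector) Borel measure $\mu$; testing against $\vph \in C_c^1(0,1)$ and using $\gm_k' \ra \gm'$ in $L^1$ identifies $\mu = D(\gm') = \gm''$, so $\gm \in BV$ has a measure-valued second derivative and $\gm_k'' \rhu \gm''$.

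The main obstacle — really the only point requiring care rather than bookkeeping — is handling the fact that the $\gm_k$ need not be constant-speed or injective, so I cannot simply invoke an arc-length normalization; I must extract all convergences at the level of the given parametrization on $[0,1]$, and I need the $BV$ bound on $\gm_k'$ (not merely a length bound) precisely to get the strong $L^1$-and-a.e. convergence of the derivatives via Helly, which is what makes (2) and (3) work and what forces $\gm' \in BV$. A secondary technical point is ensuring the limit genuinely takes values in $\sm$: this is immediate from uniform convergence and $\sm$ closed, but worth stating. I would also remark that the three conclusions hold along a single common subsequence, obtained by successively refining (Helly $\to$ weak-$*$ measures $\to$ the interpolation steps are automatic once the first two hold). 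No further estimates are needed; everything is a packaging of Helly's selection theorem, Banach–Alaoglu, dominated convergence, and Hölder interpolation.
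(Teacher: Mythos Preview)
Your argument is correct: the Helly/$BV$-compactness step gives $\gm_k'\to\gm'$ strongly in $L^1$ and pointwise a.e.\ with a uniform $L^\infty$ bound (since $\|\gm_k'\|_{L^\infty}\le\|\gm_k'\|_{BV}$ on an interval), and from there the three conclusions follow exactly as you outline via interpolation, dominated convergence, and Banach--Alaoglu with identification of the limit. The paper does not actually prove this lemma---it simply records it as a classical result and refers to \cite{Slepcev} for the proof---so your sketch in fact supplies what the paper omits; the argument you give is the standard one and is precisely what the citation points to.
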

The is a classical result (see for instance \cite{Slepcev}, to which we refer for the proof).

\begin{lemma} \label{outA}
If a minimizing sequence $  \om_n  \sse {\cal A}$ converges to some $\om \in \bar{{\cal A}}\backslash {\cal A}$, then $\om$
must be either be a point or a line segment. 
\end{lemma}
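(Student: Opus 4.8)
The plan is to analyze what happens to the diameter of $\om_n$ along the minimizing sequence and to invoke the compactness result Lemma \ref{conv} together with the lower bounds already established. First I would record the consequences of the previous lemmas: by Lemma \ref{diam} we have $\diam(\om_n)\geq \frac{2\pi\la}{1+\pi\la}$ for large $n$, by Lemma \ref{a} we have $\H^2(\om_n)\geq \frac{\pi\la^2}{8(1+\pi\la)^2}$ for large $n$, and by Lemma \ref{p} we have $\diam(\om_n)\leq C_1$ for large $n$; also $\E(\om_n)\leq 2(1+\pi\la)$ for large $n$, which through \eqref{K2} forces $\la\int_0^{\H^1(\pd\om_n)}|\gm_n''|^2\d s$ to be uniformly bounded. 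Combining the diameter bound with convexity (Lemma \ref{perimeter vs diameter convex sets}) gives $\H^1(\pd\om_n)\leq \pi C_1$, and then Hölder plus the bound on $\int|\gm_n''|^2$ gives a uniform bound on $\|\gm_n''\|_{BV}$ after reparametrizing each $\pd\om_n$ by a constant-speed map on $[0,1]$.

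Next I would apply Lemma \ref{conv} to the rescaled arc-length parametrizations $\gm_n:[0,1]\lra \R^2$ (all contained in a fixed large ball, since the $\om_n$ can be translated to share a common point and have bounded diameter): upon a subsequence, $\gm_n\ra\gm$ in $C^\al$, $\gm_n'\ra\gm'$ in $L^q$, and $\gm_n''\rhu\gm''$ as measures. The limit curve $\gm([0,1])$ is then the boundary of the limit set, and since $d(\om_n,\om)=\H^2(\om_n\triangle\om)\to 0$ while each $\om_n$ is compact and convex, $\om$ is compact and convex as well. The dichotomy then comes from the Hausdorff dimension of $\om$: a compact convex subset of $\R^2$ is either two-dimensional (hence in $\cal A$), or one-dimensional (a line segment), or zero-dimensional (a point). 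Since by hypothesis $\om\in\bar{\cal A}\backslash\cal A$, the first case is excluded, leaving exactly the two claimed possibilities.

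The main obstacle, and the step I would treat most carefully, is justifying that the limit $\om$ is genuinely ``lower-dimensional'' rather than a two-dimensional convex set that merely fails to be captured by $\cal A$ for some spurious reason, and conversely ruling out that a two-dimensional limit can arise from a sequence leaving $\cal A$. Here I would argue that if $\om$ had nonempty interior then, because $d$ is the $\H^2$-symmetric-difference metric and the $\om_n$ are convex, standard facts on convergence of convex bodies (Appendix) show $\om_n\to\om$ also in the Hausdorff metric and $\om$ is a compact convex set with $\H^2(\om)>0$, i.e. $\om\in\cal A$, contradicting $\om\notin\cal A$. Hence $\om$ has empty interior, so $\dim_\H\om\leq 1$; being compact and convex it is then a (possibly degenerate) line segment, i.e. a segment or a single point. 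I would also note for completeness that this case is not vacuous: the lower area bound \eqref{a-l} does \emph{not} by itself prevent escape, because the area bound is only shown for \emph{elements} $\om_n$ of $\cal A$, and passing to the $d$-limit can in principle lose area — which is exactly why the lemma is phrased as an implication rather than an impossibility, and why the subsequent existence proof will need to rule this case out by a separate energy comparison.
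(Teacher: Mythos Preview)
Your proposal is correct and follows essentially the same route as the paper: establish that the limit $\om$ is compact and convex, then invoke the trichotomy for compact convex subsets of $\R^2$ (two-dimensional, segment, or point) and observe that the hypothesis $\om\notin{\cal A}$ rules out the first alternative. The paper's write-up differs only in emphasis: it spends most of its space verifying convexity of $\om$ explicitly---choosing $P,Q\in\om$, approximating by $P_n,Q_n\in\om_n$, and using the $C^0$ convergence of boundary parametrizations from Lemma~\ref{conv} to push the convex combination into $\om$---whereas you assert convexity somewhat briskly (``since each $\om_n$ is compact and convex, $\om$ is compact and convex as well'') and only later, in your final paragraph, gesture at the Hausdorff-convergence argument that actually justifies it. You should make that step uniform: convexity of $\om$ must be argued regardless of whether $\om$ has empty interior, not only in the nonempty-interior branch.
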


\begin{proof}
The compactness of $\om$ can be guaranteed by Lemma \ref{p}. To see that $\om \in \bar{{\cal A}}\backslash {\cal A}$ is convex, let us
consider an arbitrary pair of points $P,Q\in \om$, $t\in (0,1)$, we now show that $(1-t)P+tQ\in \om$.
	Consider sequences $P_n,Q_n\in \om_n$ such that $P_n\to P$, $Q_n\to Q$: since
	each $\om_n$ is convex, $(1-t)P_n+tQ_n\in \om_n$. By Lemma \ref{conv},
	we know $\|\vph_n-\vph\|_{C^0([0,1];\R^2)} \to 0$. As a consequence, 
	\[ d_\H (\pd\om_n,\pd\om)\to 0 \]
	too\tcr{.}
	This allows us to choose, for each $n$, another point $z_n\in \om$ such that
	$|z_n- (   (1-t)P_n+tQ_n ) | \le  d_\H (\pd\om_n,\pd\om)$. By construction, now
	the sequences
	$(1-t)P_n+tQ_n$ and $z_n$ have the same limit. As
	$(1-t)P_n+tQ_n \to (1-t)P+tQ$, and $z_n\to z$, hence $z=(1-t)P+tQ$, using the compactness
	of $\om$ finally gives $z\in \om$. 
Then if $\om$ contains non collinear points $x,y,z$, by convexity $\triangle xyz\sse \om$, which would
 give the contradiction $\om\in {\cal A}$.
\end{proof}

%%%%%%%%%%%%%%%%%%%%%%%%%%%%%%%%%%%%%%%%%%%%%%%%%%%%%%%%%%%%%%%%%%%%%%%%%%%%%%%%%%%

\begin{lemma}\label{convergence of energies implies convergence in metric}
        Given a sequence $\Omega_n\sse \mathcal{A}$, such that $E_{p,\la}(\Omega_n) $ is bounded,
	then there exists $\Omega\in \mathcal{A}$ such that a subsequence of $\Omega_n$ (still denoted by $\Omega_n$) converges to $\Omega$ with respect to the metric $d$ defined in \eqref{d}.
\end{lemma}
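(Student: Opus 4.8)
The plan is to use the compactness results already assembled (Lemmas \ref{conv}, \ref{p}, \ref{outA}) together with the quantitative lower bounds on diameter and area (Lemmas \ref{diam}, \ref{a}) to upgrade ``$E_{p,\la}$ bounded'' to a genuine convergent subsequence within $\mathcal A$. First I would extract, from the assumed energy bound $\sup_n E_{p,\la}(\Omega_n)\leq M$, control on the boundary curves: since the elastica term is bounded, each $\pd\Omega_n$ is $H^2$-regular and, using H\"older exactly as in Lemma \ref{diam}, $\H^1(\pd\Omega_n)\leq \pi\diam(\Omega_n)$; combining this with $\diam(\Omega_n)\leq (p+1)(p+2)(24/\la)^{p+1}M^{p+2}$ from \eqref{diam-high} gives a uniform bound on the lengths. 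Reparametrizing $\pd\Omega_n$ on $[0,1]$ (scaling the arc-length parametrization), the first and second derivatives are uniformly bounded in $BV$ because $\int|\gm_n''|^2\leq M/\la$ and the curves have uniformly bounded length, so Lemma \ref{conv} applies: up to a subsequence $\gm_n\to\gm$ in $C^0$, hence $d_\H(\pd\Omega_n,\pd\Omega_\infty)\to 0$ where $\Omega_\infty$ is the compact convex set enclosed by the limit curve $\gm$.

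Next I would argue that Hausdorff convergence of the boundaries of uniformly bounded convex sets forces convergence of the sets themselves in Hausdorff distance, and then that this implies convergence in the measure-theoretic metric $d(\Omega_n,\Omega_\infty)=\H^2(\Omega_n\triangle\Omega_\infty)$. For convex bodies Hausdorff convergence is equivalent to convergence in symmetric-difference measure (this is standard, and is the kind of fact the paper defers to its Appendix on convex sets), so it suffices to produce the $d_\H$ limit. Thus $\Omega_n\to\Omega_\infty$ with respect to $d$, and by Lemma \ref{outA}-type reasoning the only way the limit could fail to lie in $\mathcal A$ is if $\Omega_\infty$ degenerates to a point or a segment.

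The crucial remaining point — and the step I expect to be the main obstacle — is ruling out degeneration: showing $\Omega_\infty$ is genuinely two-dimensional, i.e.\ $\Omega_\infty\in\mathcal A$ rather than $\bar{\mathcal A}\setminus\mathcal A$. Here I would invoke the area lower bound of Lemma \ref{a}: from $E_{p,\la}(\Omega_n)\leq M$ we get $\H^2(\Omega_n)\geq \pi\la^2/(2M^2)>0$ uniformly in $n$. Since $d(\Omega_n,\Omega_\infty)=\H^2(\Omega_n\triangle\Omega_\infty)\to 0$ and $\H^2(\Omega_n)\geq c>0$, the limit must satisfy $\H^2(\Omega_\infty)\geq c>0$; in particular $\Omega_\infty$ cannot be a point or a segment, so by Lemma \ref{outA} it is compact and convex, hence $\Omega_\infty\in\mathcal A$. (One must be slightly careful that the area bound of Lemma \ref{a} was stated for minimizing sequences via \eqref{inf E2}; but inspecting the proof, \eqref{a-low} holds for every $\Omega\in\mathcal A$ with the explicit constant $\pi\la^2/(2E_{p,\la}(\Omega)^2)$, so the bounded-energy hypothesis is all that is needed.) This completes the proof: the subsequence $\Omega_n$ converges in $d$ to some $\Omega\in\mathcal A$. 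The existence of minimizers then follows in the next step by applying this lemma to a minimizing sequence and checking lower semicontinuity of $E_{p,\la}$ along the convergence provided by Lemma \ref{conv}.
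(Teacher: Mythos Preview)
Your approach is essentially the same as the paper's: bound the diameters via \eqref{diam-high}, control the boundary curves through the elastica term, and invoke Lemma~\ref{conv} to extract a convergent subsequence. You are in fact more thorough than the paper's own proof, which stops immediately after applying Lemma~\ref{conv} and does not explicitly verify that the limit is non-degenerate (i.e.\ lies in $\mathcal A$ rather than $\bar{\mathcal A}\setminus\mathcal A$) or that curve convergence yields convergence in the metric $d$; your use of the area bound \eqref{a-low} to rule out collapse to a point or segment is exactly the missing ingredient, and is the right way to close that gap. One small omission shared with the paper: to apply Lemma~\ref{conv} you need the curves to sit in a fixed compact set $\Sigma$, which requires first translating each $\Omega_n$ so that, say, $(0,0)\in\Omega_n$ (the paper does this later, in the proof of Lemma~\ref{existence}).
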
 

 \begin{proof}
	By \eqref{diam-low} and \eqref{diam-high}, we have
	\[(p+1)(p+2) \left(\frac{24}{\la} \right)^{p+1} \E(\Omega_n)^{p+2}\ge
	\diam(\Omega_n) \geq \frac{4\pi\la}{ \E(\Omega_n)} , \]
	hence $\sup_n \diam(\Omega_n)<+\8$, i.e. $\Omega_n$
	has uniform bounded diameters. Note also that $\sup_n E_{p,\la}(\Omega_n)<+\8 $
	implies
	 \[\sup_n  \int_{\pd \Omega_n} \kappa_{\pd \Omega_n}^2\d\H_{\llcorner \pd \Omega_n}^1 <+\8,\]
	 i.e. the curvatures of $\Omega_n$ are uniformly square integrable. Then, by
	 letting $\gm_n:[0,2\pi]\lra \R^2$ being constant speed parameterizations of $\pd \Omega_n$,
	 we have that a subsequence $\gm_n$ satisfies all the hypotheses of Lemma \ref{conv},
	 concluding the proof.
\end{proof} 

Based on Lemma \ref{diam}, Lemma \ref{a}, Lemma \ref{p} and Lemma \ref{convergence of energies implies convergence in metric}, we get the following corollary.

\begin{corollary} \label{corOmega}
Any minimizer (if they exist at all) satisfies estimates \eqref{diam-l}, \eqref{a-l} and \eqref{diam-h}.
\end{corollary}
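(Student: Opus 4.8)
The plan is to deduce the statement directly from the a priori estimates of Section~\ref{sec:estimate}, by observing that a minimizer generates a (constant) minimizing sequence. Suppose $\Omega$ is a minimizer; then $\E(\Omega)=\inf_{\mathcal A}\E$, which by \eqref{inf E} is finite, and in fact $\E(\Omega)\le \pi/3+2\pi\la\le 2(1+\pi\la)$. First I would introduce the constant sequence $\Omega_n:=\Omega$ for all $n$: since $\E(\Omega_n)=\inf_{\mathcal A}\E$, this is a minimizing sequence in the sense used throughout Section~\ref{sec:estimate}, and the bound \eqref{inf E2}, namely $\E(\Omega_n)\le 2(1+\pi\la)$, holds for every $n$. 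Consequently the ``for all sufficiently large $n$'' clauses appearing in Lemmas~\ref{diam}, \ref{a} and \ref{p} are immaterial here, the sequence being constant.

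Next I would simply invoke those lemmas: Lemma~\ref{diam} yields \eqref{diam-l}, Lemma~\ref{a} yields \eqref{a-l}, and Lemma~\ref{p} yields \eqref{diam-h}, each stated for $\Omega_n\equiv\Omega$ and hence for $\Omega$ itself. Equivalently --- and this is the route I might actually write out --- one substitutes $\E(\Omega)\le 2(1+\pi\la)$ into the pointwise inequalities \eqref{diam-low}, \eqref{a-low} and \eqref{diam-high}, obtaining $\diam(\Omega)\ge 2\pi\la/(1+\pi\la)$, $\H^2(\Omega)\ge \pi\la^2/(8(1+\pi\la)^2)$ and $\diam(\Omega)\le C_1$ all at once.

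I do not expect a real obstacle: the corollary is a bookkeeping consequence of the three lemmas, and the only thing to check is that the common energy-level bound $\E(\Omega)\le 2(1+\pi\la)$ on which all those derivations rely is available for a minimizer, which is immediate from \eqref{inf E}. The one point deserving a brief remark is that a minimizer actually belongs to $\mathcal A$, and not to $\bar{\mathcal A}\setminus\mathcal A$: this is automatic, since by \eqref{K2} a point or a line segment carries infinite energy and therefore cannot be optimal.
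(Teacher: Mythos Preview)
Your proposal is correct and matches the paper's approach: the paper simply states that the corollary follows from Lemmas~\ref{diam}, \ref{a}, \ref{p} (and Lemma~\ref{convergence of energies implies convergence in metric}), without writing out any further argument. Your route of taking the constant minimizing sequence $\Omega_n\equiv\Omega$, or equivalently plugging $\E(\Omega)\le 2(1+\pi\la)$ into \eqref{diam-low}, \eqref{a-low}, \eqref{diam-high}, is exactly the intended deduction.
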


%%%%%%%%%%%%%%%%%%%%%%%%%%%%%%%%%%%%%%%%%%%%%%%%%%%%%%%%%%%%%%%%%%%%%%%%%%%%%%%%%%%

\begin{lemma} \label{existence}
For any $p \geq 1, \lambda > 0$, the functional $E_{p, \lambda}$ admits a minimizer in $\cal{A}$.
\end{lemma}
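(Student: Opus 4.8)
The plan is to prove existence by the direct method in the calculus of variations, using the quantitative estimates already established. First I would take a minimizing sequence $\Omega_n \subseteq \mathcal{A}$, so that $E_{p,\la}(\Omega_n) \to \inf_{\mathcal{A}} E_{p,\la}$. By Lemma \ref{diam} we know this infimum is finite (it is at most $\frac{\pi}{3} + 2\pi\la$), so $\sup_n E_{p,\la}(\Omega_n) < +\infty$, and hence by Lemma \ref{convergence of energies implies convergence in metric} there is a subsequence (not relabeled) and some $\Omega \in \bar{\mathcal{A}}$ with $d(\Omega_n, \Omega) \to 0$; moreover the associated constant-speed parameterizations $\gm_n$ of $\pd\Omega_n$ converge in the sense of Lemma \ref{conv} to a limit curve $\gm$, with $\gm_n \to \gm$ in $C^\al$, $\gm_n' \to \gm'$ in $L^2$, and $\gm_n'' \rightharpoonup \gm''$ weakly as measures.

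The second step is to rule out the degenerate limits. By Lemma \ref{outA}, if $\Omega \in \bar{\mathcal{A}} \setminus \mathcal{A}$ then $\Omega$ is either a point or a line segment; in either case $\H^2(\Omega) = 0$. But by Corollary \ref{corOmega} (equivalently, by passing $\eqref{a-l}$ to the limit, using that $\H^2(\Omega_n \triangle \Omega) \to 0$ forces $\H^2(\Omega_n) \to \H^2(\Omega)$), the limit must satisfy $\H^2(\Omega) \geq \frac{\pi\la^2}{8(1+\pi\la)^2} > 0$, a contradiction. Hence $\Omega \in \mathcal{A}$ is an admissible competitor.

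The third and crucial step is lower semicontinuity of $E_{p,\la}$ along this sequence, i.e. $E_{p,\la}(\Omega) \leq \liminf_n E_{p,\la}(\Omega_n)$. I would treat the two terms separately. For the elastica term, the $H^2$-bound on the (rescaled) parameterizations together with the weak-$*$ convergence $\gm_n'' \rightharpoonup \gm''$ and the lower semicontinuity of the $L^2$-norm under weak convergence gives $\int |\gm''|^2 \le \liminf \int |\gm_n''|^2$ after accounting for the (convergent) length normalization; one must check that $\gm$ is indeed $H^2$ and an arc-length parameterization of $\pd\Omega$, which follows from $C^\al$ and $L^2$ convergence of the derivatives plus the convexity of the limit. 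For the average-distance term, since $\H^2(\Omega_n \triangle \Omega) \to 0$ and $\diam(\Omega_n)$ is uniformly bounded, the distance functions $\dist(\cdot, \pd\Omega_n)$ converge (using Hausdorff convergence of the boundaries, which follows from $C^0$-convergence of $\gm_n$) uniformly on compacts to $\dist(\cdot, \pd\Omega)$, and the domains of integration converge in measure, so one passes to the limit in $\int_{\Omega_n} \dist^p(x,\pd\Omega_n)\,dx$ directly — in fact with equality, so no semicontinuity is even lost here. Combining, $E_{p,\la}(\Omega) \leq \liminf_n E_{p,\la}(\Omega_n) = \inf_{\mathcal{A}} E_{p,\la}$, so $\Omega$ is a minimizer.

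I expect the main obstacle to be the careful bookkeeping in the third step: reconciling the fixed-interval parameterizations $\gm_n : [0,2\pi] \to \R^2$ from Lemma \ref{conv} with the arc-length parameterizations on $[0,\H^1(\pd\Omega_n)]$ appearing in the definition \eqref{K2} of the curvature term, and verifying that the length functional $\H^1(\pd\Omega_n)$ converges to $\H^1(\pd\Omega)$ (lower semicontinuity of length suffices, but one also needs it bounded below away from zero, which \eqref{diam-l} provides). One must also confirm that the limit curve $\gm$ parameterizes precisely $\pd\Omega$ and not some larger set, which uses convexity of $\Omega$ and the Hausdorff convergence $d_\H(\pd\Omega_n, \pd\Omega) \to 0$ established in the proof of Lemma \ref{outA}. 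These are all standard but require attention to the normalizations.
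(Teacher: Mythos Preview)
Your proposal is correct and follows essentially the same approach as the paper's proof: direct method, compactness via the diameter bounds and Lemma~\ref{conv}/Lemma~\ref{convergence of energies implies convergence in metric}, lower semicontinuity of the elastica term via weak $H^2$ convergence, continuity of the average-distance term via Hausdorff convergence of the boundaries and convergence of the domains in measure, and exclusion of degenerate limits via Lemma~\ref{outA} and the area lower bound. The paper differs only in minor bookkeeping (it explicitly translates so that $(0,0)\in\Omega_n$ to confine the sequence to a fixed ball, and it writes out the splitting $\Omega_n\cap\Omega$, $\Omega_n\setminus\Omega$, $\Omega\setminus\Omega_n$ for the average-distance term in detail), but the overall structure and the key ideas are the same as yours.
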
 

\begin{proof}
Consider a minimizing sequence $\Omega_n\sse {\cal A}$. 
Since $\E$ is invariant under rigid movements, we can assume that $(0,0) \in \Omega_n $ for any $n$.
In view of \eqref{inf E2}, without loss of generality, we can also impose
\begin{equation*}
\sup_n \E(\Omega_n)\leq 2(1+\pi\la).
\end{equation*}
Then by Lemma \ref{p} we get $\sup_n \diam(\Omega_n)\leq C_1$, hence
$$ \Omega_n\sse B((0,0),C_1) \quad \text{for any }n.$$
Thus $\Omega_n$ is a sequence of uniformly bounded, compact sets, and there exists (upon subsequence, which we do not relabel)
a limit set $\Omega\in \bar{{\cal A}}$ such that $\Omega_n\to \Omega$ in the metric $d$ (defined in \eqref{d}). 

\medskip

We claim
\begin{align}
\int_\Omega \dist^p(z,\pd \Omega)\d z&=\lim_{n\to+\8} \int_{\Omega_n}\dist^p(z,\pd \Omega_n)\d z,\label{con3} \\
\int_{\pd \Omega}\kappa_{\pd \Omega}^2\d\H^1_{\llcorner \pd \Omega}&\leq
\liminf_{n\to+\8}\int_{\pd \Omega_n}\kappa_{\pd \Omega_n}^2\d\H^1_{\llcorner \pd \Omega_n} \label{lsc k}.
\end{align}
The latter, i.e. \eqref{lsc k}, follows rather straightforwardly from the lower semicontinuity
of the $H^2$ norm. 

\medskip

We need to prove
 \eqref{con3}. In the following, it is useful to recall Lemma \ref{perimeter vs diameter convex sets}, which states that the diameter is continuous with respect to the convergence
in $\A$.
We split the sums 
\begin{align*}
\int_{\Omega_n} \dist^p(z,\pd \Omega_n)\d z & =\int_{\Omega_n\backslash \Omega} \dist^p(z,\pd \Omega_n)\d z+\int_{\Omega_n\cap \Omega} \dist^p(z,\pd \Omega_n)\d z,\\
\int_{\Omega} \dist^p(z,\pd \Omega)\d z & =\int_{ \Omega \backslash \Omega_n} \dist^p(z,\pd \Omega)\d z+\int_{\Omega_n\cap \Omega} \dist^p(z,\pd \Omega)\d z,
\end{align*}
and note that
\begin{align}
\bigg|\int_{\Omega_n}& \dist^p(z,\pd \Omega_n)\d z-\int_{\Omega} \dist^p(z,\pd \Omega)\d z \bigg| \notag\\
&\leq \int_{\Omega_n\backslash \Omega} \dist^p(z,\pd \Omega_n)\d z+
\int_{\Omega \backslash \Omega_n} \dist^p(z,\pd \Omega)\d z \label{en-1}\\
&+\int_{\Omega_n\cap \Omega}| \dist^p(z,\pd \Omega_n)-\dist^p(z,\pd \Omega)|\d z.\label{en-2}
\end{align}
Moreover,
\begin{align*}
\int_{\Omega_n\backslash \Omega} \dist^p(z,\pd \Omega_n)\d z & \leq \H^2(\Omega_n\backslash \Omega)
\diam (\Omega_n)^p \leq \H^2(\Omega_n\backslash \Omega) C_1^p \to 0,\\
\int_{\Omega \backslash \Omega_n} \dist^p(z,\pd \Omega)\d z &\leq \H^2(\Omega \backslash \Omega_n)\diam (\Omega)^p
\leq \H^2(\Omega \backslash \Omega_n) C_1^p \to 0,
\end{align*}
hence 
\begin{equation*}
\lim_{n\to+\8}\int_{\Omega_n\backslash \Omega} \dist^p(z,\pd \Omega_n)\d z=
\lim_{n\to+\8}\int_{\Omega \backslash \Omega_n} \dist^p(z,\pd \Omega)\d z=0.
\end{equation*}
 To prove
 \begin{equation*}
\lim_{n\to+\8}\int_{\Omega_n\cap \Omega}| \dist^p(z,\pd \Omega_n)-\dist^p(z,\pd \Omega)|\d z=0,
\end{equation*}
denote by $d_\H$ the Hausdorff distance, and
by the mean value theorem, it holds that
\begin{align*}
\int_{\Omega_n\cap \Omega}&| \dist^p(z,\pd \Omega_n)-\dist^p(z,\pd \Omega)|\d z\\
&\leq \int_{\Omega_n\cap \Omega}| \dist(z,\pd \Omega_n)-\dist(z,\pd \Omega)|
\cdot p \sup_{z\in \Omega_n\cap \Omega}\Big(\max \{\dist(z,\pd \Omega_n),
\dist(z,\pd \Omega)\}\Big)^{p-1}\d z\\
&\leq \H^2(\Omega_n\cap \Omega)d_\H(\pd \Omega_n,\pd \Omega)\cdot p C_1^{p-1}
\leq \pi C_1^2 d_\H(\pd \Omega_n,\pd \Omega)\cdot p C_1^{p-1}\to 0.
\end{align*}
Thus both terms \eqref{en-1} and \eqref{en-2} converge to zero, and \eqref{con3} is proven.

Combining with \eqref{con3} gives
\begin{equation*}
\E(\Omega)\leq \liminf_{n\to+\8}\E(\Omega_n)=\inf_{\-{\cal A}}\E,
\end{equation*}
 hence $\Omega$ is effectively a minimizer of $\E$. Since $\Omega$ is the limit of $\Omega_n \sse \mathcal{A}$, based on Lemma \ref{outA} and Corollary \ref{corOmega}, we have $\Omega \in \mathcal{A}$. 
 
\end{proof}

\section{Proof of regularity}\label{sec:proof}

This section completes the proof of Theorem~\ref{regularity} by establishing the desired
regularity of the minimizers. A few technical estimates used in the proof 
are left as separate lemmas proved at the end of the section.

\begin{proof}[\bf{Proof of Theorem~\ref{regularity}}]
Let $\Omega$ be a minimizer of $\E$, and
let $\gm$ be an arc-length parameterization of $\pd \Omega$.
 Assume there exist $M,\vep$, $t_1<t_2$ such that
\begin{eqnarray}
|\gm'(t_2)-\gm'(t_1)|=M\vep,\qquad t_2-t_1=\vep. \label{gammaprime}
\end{eqnarray}
The quantity $\vep$ is assumed to  be vanishingly small, and estimates
involving $\vep$ will be in general valid for sufficiently small $\vep$,
rather than all $\vep$.
The goal is to find an upper bound for $M$.

\medskip

Without loss of generality, upon rigid movements, we can assume $t_1=0$, $t_2=\vep$.
% and $\gm'(0)=(0,1)$. 
 Endow $\R^2$ with
a Cartesian coordinate system with
\begin{equation}
\gm(0)\in\{x\geq 0,y=0\} \qquad \gm(\vep)\in \{y\geq 0, x=0\},\qquad \gm'(0)=(0,1). \label{Cartesian}
\end{equation}
%
%The exact orientation of $x$-axis is not relevant.
We first give an estimate on $\gm(\vep)_y$. Using H\"older's inequality, and recalling the fact that
$$\kappa_{\pd \Omega}\ll \H^1_{\llcorner \pd \Omega},\qquad \frac{\d\kappa_{\pd \Omega}}{\d \H^1_{\llcorner \pd \Omega}}\in L^2(0,\H^1(\pd 
\Omega);\R),$$
for any $t\in [0,\vep]$, it holds that
\begin{align*}
\frac{\E(\Omega)}\la&\geq\int_{\gm([0,t])} \kappa_{\pd \Omega}^2\d\H^1_{\llcorner \pd \Omega} 
=\int_0^t |\gm''|^2 \d s \geq \frac{|\gm'(t)-\gm'(0)|^2}{\vep} \geq \frac{|\gm'(t)_y-1|^2}{\vep}\\
&\Lra |\gm'(t)_y-1|  = 1-\gm'(t)_y\leq \sqrt{\vep \E(\Omega )/\la}\Lra \gm'(t)_y\ge 1-\sqrt{\vep \E(\Omega)/\la},
\end{align*}
%\begin{align*}
%\frac{\E(R)}\la&\geq\int_{\gm([0,t])} \kappa_{\pd R}^2\d\H^1_{\llcorner \pd R} 
%=\int_0^t |\gm''|^2 \d s \geq \frac{|\gm'(t)-\gm'(0)|^2}{\vep} \geq \frac{|\gm'(t)_y-1|^2}{\vep}\\
%&\Lra |\gm'(t)_y-1|  = 1-\gm'(t)_y\leq \sqrt{\vep \E(R)/\la}\Lra \gm'(t)_y\ge 1-\sqrt{\vep \E(R)/\la},
%\end{align*}
hence
\begin{align*}
\gm(\vep)_y &= \int_0^\vep \gm'(t)_y\d t \ge  \int_0^\vep [1-\sqrt{\vep \E(\Omega)/\la}] \d t
=\vep[1-\sqrt{\vep \E(\Omega)/\la}]
\end{align*}
On the other hand, as
we imposed $\gm(\vep)_y\geq 0$, we have 
\[\gm(\vep)_y = |\gm(\vep)_y| =\bigg|\int_0^\vep \gm'(t)_y\d t\bigg| \le \int_0^\vep |\gm'(t)_y|\d t\le \vep.\]
In particular, 
\[\vep-\sqrt{ \E(\Omega)/\la}\vep^{3/2}\le  \gm(\vep)_y \le \vep,  \]
therefore, 
\begin{eqnarray} \label{gmepy}
\gm(\vep)_y=\vep+O(\vep^{3/2}).
\end{eqnarray}

%\begin{align*}
%\vep(1- \sqrt{\vep \E(\Omega)/\la})  &\leq \vep \bigg(1-\sup_{0\leq t\le \vep} |\gm'(t)_y| \bigg) \\
%&\le|\gm(\vep)_y-\gm(0)_y|=|\gm(\vep)_y| \leq \vep \bigg(1+\sup_{0\leq t\le \vep} |\gm'(t)_y| \bigg) \le
%\vep(1+ \sqrt{\vep \E(\Omega)/\la}).  
%\end{align*}
%In particular, since we imposed $\gm(\vep)_y\geq 0$, we have $\gm(\vep)_y=\vep+O(\vep^{3/2})$.

\bigskip

Construct the competitor  
$\Omega_\vep$ in the following way:

\begin{enumerate}
\item denote by $t_\pm$ the two times such that $\gm'(t_\pm)=(\pm 1,0)$, and by
$t_\bot$ the time such that $\gm'(t_\bot)=(0,-1)$. Since we imposed $\gm'(0)=(0,1)$, without loss of generality we can assume
that the tangent direction turns {\em counterclockwise}, i.e., 
$$\vep< t_-<t_\bot<t_+<\H^1(\pd \Omega). $$
Note that 
\begin{align*}
\frac{2 }{t_\bot-t_-}&=
\frac{   |\gm'(t_\bot)-\gm'(t_-)|^2   }{t_\bot-t_-} \leq \int_{\gm([t_-,t_\bot])} \kappa_{\pd \Omega}^2\d\H^1_{\llcorner \pd \Omega} 
\leq \frac{\E(\Omega)}\la \overset{\eqref{inf E}}\leq 2 \la^{-1}+2\pi\\
&\Lra t_\bot-t_-\geq \frac{1}{ \la^{-1}+\pi }.
\end{align*}
Similarly, we get 
\begin{eqnarray} \label{lbd}
\min\{\H^1(\pd \Omega)-t_+,\ t_+-t_\bot,\ t_-\}\geq    \frac{1}{ \la^{-1}+\pi }.
\end{eqnarray}

\item Define the vector field
$v:[t_-,t_+]\lra \R^2$ as
\begin{align}
v(s):=\left\{
\begin{array}{cl}
\left( \cos\Big( \frac{\pi}{2}(1+\frac{s - t_-}{t_\bot-t_-})\Big) ,\sin\Big( \frac{\pi}{2}(1+\frac{s - t_-}{t_\bot-t_-})\Big)  \right),
& \text{if } s \in [t_-,t_\bot],\\
\left( \cos\Big( \frac{\pi}{2}(1+\frac{t_+ - s}{t_+ -t_\bot})\Big) ,-\Big(\frac{t_+ - s}{t_+-t_\bot}\Big)^2
\sin\Big( \frac{\pi}{2}(1+\frac{t_+ - s}{t_+-t_\bot})\Big)  \right),
& \text{if } s \in [t_\bot,t_+].
\end{array}
\right. \label{v}
\end{align}
%Intuitively, $v$ is the vector field such that $v(t_\pm)=(0,\mp1)$, $v(t_\bot)=(-1,0)$, and the
%speed $|\dot v|$ is constant in the two intervals $[t_-,t_\bot]$ and $[t_\bot,t_+]$
%respectively.
Note first that $v$ is continuous (smooth outside $t_\bot$), and direct computation gives
 \begin{equation*}
v'(s)=\left\{
\begin{array}{cl}
\frac{\pi/2}{t_\bot-t_-}
\left( -\sin\Big( \frac{\pi}{2}(1+\frac{s - t_-}{t_\bot-t_-})\Big) ,\cos\Big( \frac{\pi}{2}(1+\frac{s - t_-}{t_\bot-t_-})\Big)  \right),
& \text{if } s \in [t_-,t_\bot),\\
\Big( \frac{\pi/2}{t_+-t_\bot}\sin\Big( \frac{\pi}{2}(1+\frac{t_+ - s}{t_+-t_\bot})\Big) 
,\frac{\pi/2}{t_+-t_\bot}\Big(\frac{t_+ - s}{t_+-t_\bot}\Big)^2\cos\Big( \frac{\pi}{2}(1+\frac{t_+ - s}{t_+-t_\bot})\Big)  
& \\
+2\frac{t_+ - s}{( t_+-t_\bot)^2 }\sin\Big( \frac{\pi}{2}(1+\frac{t_+ - s }{t_+-t_\bot})\Big)\Big)&\text{if } s \in (t_\bot,t_+].
\end{array}\right.
\end{equation*}
In particular, 
\begin{equation*}
\lim_{t \to t_\bot^-} v'(t) =\frac{\pi/2}{t_\bot-t_-}(0,-1),\qquad
\lim_{t  \to t_\bot^+} v'(t)= \frac{\pi/2}{t_+-t_\bot}(0,-1),
\end{equation*}
i.e., the left and right limit differ just by a multiplicative constant. This observation is crucial, since
it implies that  the tangent derivative of the arc-length reparameterization of $v$ does {\em not} jump 
at $t=t_\bot$ (recall also that $\gm'$ does not jump at $t=t_\bot$, hence the tangent derivative of the arc-length reparameterization
of $\gm+c v$ does not jump at $t=t_\bot$, for any $c>0$). 
We claim:
\begin{align}
\|v'\|_{L^\8}&\le  \max\Big\{ \frac{\pi/2}{t_\bot-t_-},\frac{4}{t_+-t_\bot} \Big\}
\leq 4 (\la^{-1} + \pi)    
%  =:\mu_{\la}
<+\8,\label{t diff}\\
 \|v''\|_{L^\8}&\le   16(\la^{-1}+\pi)^2   
 %= \mu_{\la}^2 
 <+\8.\label{t diff 2}
\end{align}
The proofs of both claims
%, being quite technical, 
are presented in Lemma \ref{v norm} below.
\item Let $\gm_\vep$ be the curve such that
\begin{equation}
\gm_\vep(t):=
\left\{
\begin{array}{cl}
(2\gm(t)_x, 2\gm(t)_y) & \text{if } t\in [0,\vep],\\
\gm(t)+(0,\gm(\vep)_y) & \text{if } t\in [\vep,t_-],\\
\gm(t)+\gm(\vep)_y v(t)&\text{if } t\in [t_-,t_+],\\
\left (
 \gm(t)_x\left(1+\frac{\gm(0)_x}{\gm(0)_x-\gm(t_+)_x}\right)
-\frac{\gm(t_+)_x\gm(0)_x}{\gm(0)_x-\gm(t_+)_x} 
 \right) & \text{if } t\in [t_+,\H^1(\pd \Omega)].
\end{array}
\right. \label{competitor}
\end{equation}
Note that $\gamma_\epsilon$ defined in \eqref{competitor} is injective.
Let $\pd \Omega_\vep$ be the image of $\gm_\vep$, and $\Omega_\vep$ be the bounded region of the plane delimited
by $\pd \Omega_\vep$. This will be our competitor. Observe first that, as $\gm'(t_+)=(1,0)$,
\begin{align*}
\lim_{t\to t_+^-} \gm_\vep'(t)=\left(1+\gm(\vep)_y\frac{\pi/2}{t_+-t_\bot}\right) (1,0),
\qquad \lim_{t\to t_+^+} \gm_\vep'(t)=\left(1+\frac{\gm(0)_x}{\gm(0)_x-\gm(t_+)_x} \right) (1, 0),
\end{align*}
i.e., the left and right limit differ just by a multiplicative constant. This observation is again crucial, since
it implies that  the tangent derivative of the arc-length reparameterization of $\gm+ \gm(\vep)_y v$ does {\em not} jump 
at $t=t_+$.
\end{enumerate}
Intuitively, for $t\in [0, H^1(\pd \Omega)] $ the competitor $\gm_\vep$ is constructed from $\gm$ by:
\begin{enumerate}
\item  a homothety of center $(0,0)$ and ratio 2 for $t\in[0,\vep] $,

\item a translation of the vector $(0,\gm(\vep)_y)$ for $t\in [\vep,t_-] $,

\item adding the smooth vector field $\gm(\vep)_y v(t)$ for $t\in [t_-,t_+]$,

\item a scaling of factor $1+\frac{\gm(0)_x}{\gm(0)_x-\gm(t_+)_x}$ and then translation to the right by $\frac{\gm(t_+)_x\gm(0)_x}{\gm(0)_x-\gm(t_+)_x} $ in the $x$ direction 
%(with fixed line being $x=\gm(t_+)_x$)
for $t\in [t_+,\H^1(\pd \Omega)] $.
\end{enumerate}
\begin{figure}[h!]
\begin{center}
\includegraphics[scale=1.0]{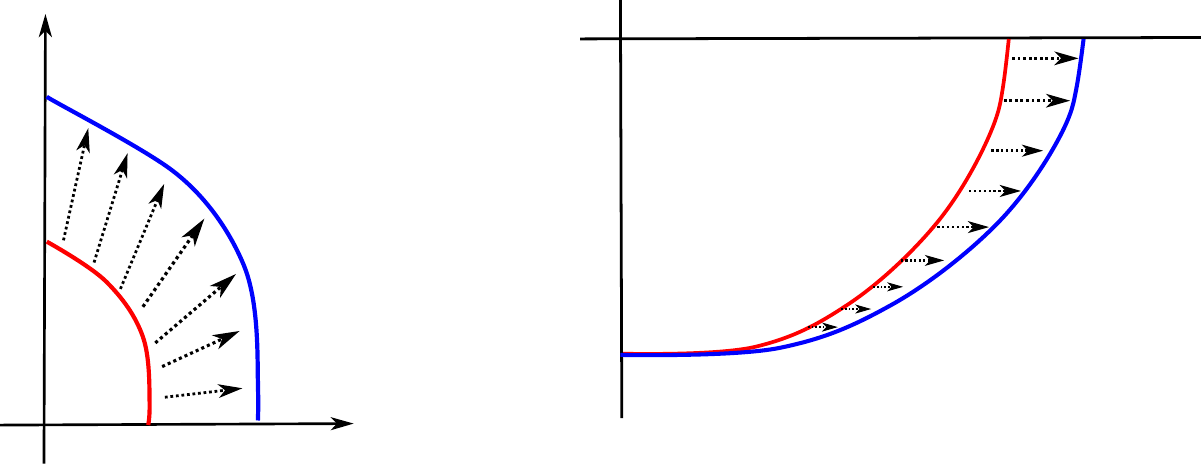}
\put(-342,0){$0$}\put(-252,0){$x$-axis}\put(-352,135){$y$-axis}
\put(-325,35){$\pd \Omega$}\put(-302,90){$\pd \Omega_\vep$}
\put(-313,0){\small{$\gm(0)$}}\put(-355,62){\small{$\gm(\vep)$}}
\put(-283,0){\small{$\gm_\vep(0)$}}\put(-357,102){\small{$\gm_\vep(\vep)$}}
\put(-155,125){$x$-axis}\put(-200,5){$\{x=\gm(t_+)_x\}$}
\put(-80,90){$\pd \Omega$}\put(-70,50){$\pd \Omega_\vep$}
\put(-85,130){$\substack{\gm(\H^1(\pd \Omega))\\ =\gm(0)}$}
\put(-33,110){$\substack{\gm_\vep(\H^1(\pd \Omega))\\ =\gm_\vep(0)}$}
\put(-200,30){$\substack{\gm(t_+)\\ =\gm_\vep(t_+)}$}
\caption{Representation of the construction of the competitor $\gm_\vep$,
 for $t\in [0,\vep]$ (left) and $t\in [t_+,\H^1(\pd \Omega)] $ (right).}
%\label{default}
\end{center}
\end{figure}
It is straightforward to check compactness and convexity for $\Omega_\vep$. 
Moreover, denoting by $\~\gm_\vep$
the arc-length reparameterization of $\gm_\vep$, the curvature of $\~\gm_\vep$ is still a {\em function}
(instead of a more generic measure), as the is $\gm_\vep$ always constructed from $\gm$ via translation,
scaling, or sum with smooth vector fields, and the 
tangent derivative $\~\gm_\vep'$ never jumps at ``junction points'' (i.e., for $t=\vep,t_-,t_\bot,t_+,\H^1(\pd \Omega)$).
 
Next, to estimate
$\E(\Omega_\vep)-\E(\Omega)$, we claim
\begin{align}
\int_{\Omega_\vep}\dist^p(z,\pd \Omega_\vep) \d z-\int_{\Omega} \dist^p(z,\pd \Omega)\d z
  &\leq \vep \cdot p(C_1+1)^{p-1} \pi C_1^2/2  + (2 \vep)^{p+1} \pi (C_1 + 1).
  %\vep \cdot 2p(D_{p,\la}+1)^{p-1}\pi D_{p,\la}^2+
%\vep^{p+1}\pi D_{p,\la}, 
\label{F}\\
 \int_{\pd \Omega_\vep} \kappa_{\pd \Omega_\vep}^2\d\H^1_{\llcorner \pd \Omega_\vep} -\int_{\pd \Omega} \kappa_{\pd \Omega}^2\d\H^1_{\llcorner \pd \Omega} 
 &\leq \vep\Big( C_2 -\frac{M^2 }2\Big)+O(\vep^{3/2}), \label{K}
\end{align}
with
\begin{equation*}
%K_{p,\la}=\mu_\la (\la^{-1}+2\pi) +\nu_\la \sqrt{\pi D_{p,\la}^2 (\la^{-1}+2\pi)}
C_2 = 32 (\la^{-1}+ \pi)^2 + 32 \sqrt{2 C_1 \pi }  (\la^{-1} + \pi)^{5/2}
%4 \mu_\la (\la^{-1}+\pi) +2 \mu_\la^2  \sqrt{4 \pi C_1 (\la^{-1}+\pi)} 
\end{equation*}
defined in \eqref{C_2}.

\medskip

{\em Step 1. Proof of \eqref{K}.} Using the notation from \eqref{competitor}, 
we make the following claims:
\begin{align}
\int_{\gm([\vep,t_-]) } \kappa_{\pd \Omega}^2\d\H^1_{\llcorner \pd \Omega} 
&= \int_{\gm_{\vep}([\vep,t_-]) } \kappa_{\pd \Omega_\vep}^2\d\H^1_{\llcorner \pd \Omega_\vep}, \label{K-1}\\
\int_{\gm_{\vep}([t_+,\H^1(\pd \Omega)]) } \kappa_{\pd \Omega_\vep}^2\d\H^1_{\llcorner \pd \Omega_\vep}-\int_{\gm([t_+,\H^1(\pd \Omega )]) } \kappa_{\pd \Omega}^2\d\H^1_{\llcorner \pd \Omega} 
&\le O(\vep^{3/2}) , \label{K-2}\\
 \int_{\gm_\vep([t_-,t_+]) } \kappa_{\pd \Omega_\vep}^2\d\H^1_{\llcorner \pd \Omega_\vep}
 -\int_{ \gm([t_-,t_+])} \kappa_{\pd \Omega}^2\d\H^1_{\llcorner \pd \Omega} 
&\leq   \vep  C_2 +O(\vep^{3/2}),\label{K-3}\\
\int_{\gm([0,\vep])} \kappa_{\pd \Omega }^2\d\H^1_{\llcorner \pd \Omega} 
- \int_{\gm_\vep([0,\vep])} \kappa_{\pd \Omega_\vep}^2\d\H^1_{\llcorner \pd \Omega_\vep} &\geq \frac{M^2 \vep}2.\label{K-4}
\end{align}
The proof of all four assertions are quite technical, and for reader's convenience, will be done in Lemmas \ref{K-1 proof}
and \ref{K-3 proof}
below.
Combining \eqref{K-1}, \eqref{K-2}, \eqref{K-3}, and \eqref{K-4} gives
\begin{equation*}
\int_{\Omega_\vep} \kappa_{\pd \Omega_\vep}^2 \d\H^1_{\llcorner \Omega_\vep} \leq 
\int_{\Omega } \kappa_{\pd \Omega }^2 \d\H^1_{\llcorner \Omega} + \vep\Big( C_2 -\frac{M^2}2\Big)+O(\vep^{3/2}),
\end{equation*}
hence \eqref{K}.

\bigskip

{\em Step 2. Proof of \eqref{F}.}
Recall that, the construction 
of the competitor $\Omega_\vep$ in \eqref{competitor} gives also  
\begin{enumerate}
\item[(1)] $\gm (\vep)_y > \gm(0)_x > 0 $.
\item[(2)]  For $t\in[\vep,\H^1(\pd \Omega)]$ the competitor $\gm_\vep (t)$ is obtained by translating
 $\gm(t)$ by a vector of length at most $2 \vep$. Moreover,
 it holds
 \[\frac{\gm(t)_x-\gm(t_+)_x}{\gm(0)_x-\gm(t_+)_x}\gm(0)_x  \le 2\vep,\qquad\fal
 t\in[\vep,\H^1(\pd \Omega)], \]
 since by construction we have $\gm(0)_x\le 2\vep$, and 
$\gm(0)$ is the point with the most positive $x$ coordinate,
 which ensures 
$\frac{\gm(t)_x-\gm(t_+)_x}{\gm(0)_x-\gm(t_+)_x}\le 1$.

\item[(3)] For $t\in [0,\vep]$, the competitor $\gm_\vep(t)$ is obtained by scaling $\gm(t)$
by a factor of 2.
\end{enumerate}
One readily checks for all $t$ it holds that $|\gm_\vep(t)-\gm(t)|\leq 2\vep$, and
$$d_\H(\pd \Omega_\vep,\pd \Omega)\leq 2\vep.$$
Thus, by the mean value theorem, for each point $x \in \Omega_\vep\cap \Omega$ it holds that
\begin{align*}
\dist^p(x,\pd \Omega_\vep) -\dist^p(x,\pd \Omega) &\leq (\dist(x,\pd \Omega_\vep) -\dist(x,\pd \Omega))  \\
&\qquad\cdot p
(\sup_{z \in \Omega_\vep\cap \Omega}\max\{\dist(x,\pd \Omega_\vep) ,\dist(x,\pd \Omega)\})^{p-1}\\
&\leq
2\vep  \cdot p(\diam(\Omega)+2\vep)^{p-1}
\overset{\eqref{diam-h}}\leq\vep \cdot 2p(C_1+1)^{p-1},
\end{align*}
 with $C_1$ defined in \eqref{C_1}. Thus, by convexity of $\Omega$,
 \begin{equation*}
\H^2(\Omega_\vep\cap \Omega) \leq \H^2( \Omega) \leq \pi (\diam(\Omega)/2)^2 \tcr{.}
\end{equation*}
 It follows that
 \begin{align}
\int_{\Omega_\vep\cap \Omega}\dist^p(x,\pd \Omega_\vep)\d x -\int_{\Omega}\dist^p(x,\pd \Omega)\d x &\leq 
\vep \cdot 2p(C_1+1)^{p-1}\H^2(\Omega_\vep\cap \Omega) \notag\\
&\leq  \vep \cdot p(C_1+1)^{p-1} \pi C_1^2/2. \label{av1}
\end{align}
Then note that, since by construction we have $d_\H(\pd \Omega_\vep,\pd \Omega)\leq 2 \vep$,
it follows that
\begin{eqnarray}\label{av2}
\int_{\Omega_\vep\backslash \Omega}\dist^p(x,\pd \Omega_\vep)\d x \leq (2\vep)^{p}\H^2(\Omega_\vep\backslash \Omega) & \leq  & (2 \vep)^{p} \cdot 2 \vep \H^1(\pd \Omega_{\vep}) \nonumber \\
 &\leq&   (2 \vep)^{p+1} \pi (\diam(\Omega) + 4 \vep) \leq  (2 \vep)^{p+1} \pi (C_1 + 1).
\end{eqnarray}
 The inequality $\H^2(\Omega_\vep\backslash \Omega) \le
	 2 \vep \H^1(\pd \Omega_{\vep}) $ is due to the convexity of $\Omega_\vep$,
	 the fact that $d_\H(\pd \Omega_\vep,\pd \Omega)\leq 2 \vep$, and
	 Lemma \ref{perimeter vs area of tubular neighborhood}.
Combining \eqref{av1} and \eqref{av2} gives
\begin{align}
\int_{\Omega_\vep}\dist^p(x,\pd \Omega_\vep)\d x  - \int_{\Omega}\dist^p(x,\pd \Omega)\d x &\leq  \vep \cdot p(C_1+1)^{p-1} \pi C_1^2/2  + (2 \vep)^{p+1} \pi (C_1 + 1).
\label{av}
\end{align}
Thus \eqref{F}
is proven.

\bigskip

Combining \eqref{F} and \eqref{K} we finally infer
\begin{align*}
\E(\Omega_\vep)&-\E(\Omega)\\
&=\int_{\Omega_\vep}\dist^p(x,\pd \Omega_\vep) \d x-\int_{\Omega} \dist^p(x,\pd \Omega)\d x
+ \la\bigg( \int_{\pd \Omega_\vep} \kappa_{\pd \Omega_\vep}^2\d\H^1_{\llcorner \pd \Omega_\vep}
 -\int_{\pd \Omega} \kappa_{\pd \Omega}^2\d\H^1_{\llcorner \pd \Omega}  \bigg)\\
&\leq   
\vep \cdot p(C_1+1)^{p-1} \pi C_1^2/2  + (2 \vep)^{p+1} \pi (C_1 + 1)
+\la\bigg(\vep\Big(C_2-\frac{M^2}2\Big)+O(\vep^{3/2})\bigg).
\end{align*}
Note also that the term $(2 \vep)^{p+1} \pi (C_1 + 1)  $ can be absorbed into $O(\vep^{3/2})$,
due to the condition $p\geq 1$, hence
\begin{equation*}
\E(\Omega_\vep)-\E(\Omega) \leq 
\la\bigg(\vep\Big( \la^{-1}p(C_1+1)^{p-1}\pi C_1^2/2 +C_2-\frac{M^2}2\Big)+O(\vep^{3/2})\bigg).
\end{equation*}
The minimality assumption on $\Omega$, and the arbitrariness of $\vep>0$ then imply
\begin{align*}
 & \la^{-1}p(C_1+1)^{p-1}\pi C_1^2/2 +C_2-\frac{M^2}2 \geq 0\\
&\Lra M\leq  \sqrt{ \la^{-1}p(C_1+1)^{p-1}\pi C_1^2+ 2 C_2} = C,
\end{align*}
and the proof is complete.

\end{proof}

\begin{lemma}\label{v norm}
Under the hypotheses of Theorem \ref{regularity}, assertions \eqref{t diff} and \eqref{t diff 2} hold.
\end{lemma}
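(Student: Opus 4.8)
The plan is to differentiate the explicit piecewise formula \eqref{v} directly and estimate each resulting term by hand, using that every sine and cosine factor has absolute value at most $1$ and that the ratio $\frac{t_+-s}{t_+-t_\bot}$ lies in $[0,1]$ for $s\in[t_\bot,t_+]$; the passage from these expressions to the claimed absolute constants will then be immediate from the two lower bounds $t_\bot-t_-\ge(\la^{-1}+\pi)^{-1}$ (the display just preceding \eqref{lbd}) and $t_+-t_\bot\ge(\la^{-1}+\pi)^{-1}$ (contained in \eqref{lbd}).

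For \eqref{t diff} I would argue as follows. On $[t_-,t_\bot]$ the curve $v$ traces the unit circle at constant angular speed $\frac{\pi/2}{t_\bot-t_-}$, so $|v'(s)|\equiv\frac{\pi/2}{t_\bot-t_-}$ there. On $[t_\bot,t_+]$, abbreviating $u(s):=\frac{t_+-s}{t_+-t_\bot}\in[0,1]$ and $\phi(s):=\frac{\pi}{2}(1+u(s))$, so that $|u'|=\frac{1}{t_+-t_\bot}$ and $|\phi'|=\frac{\pi/2}{t_+-t_\bot}$, the two Cartesian components of $v'$ recorded after \eqref{v} are $-\phi'\sin\phi$ and $-2uu'\sin\phi-u^2\phi'\cos\phi$, of absolute value at most $\frac{\pi/2}{t_+-t_\bot}$ and $\frac{2+\pi/2}{t_+-t_\bot}$ respectively, so $|v'(s)|\le\frac{1}{t_+-t_\bot}\sqrt{(\pi/2)^2+(2+\pi/2)^2}<\frac{4}{t_+-t_\bot}$. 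Taking the supremum over $[t_-,t_+]$ gives the first inequality in \eqref{t diff}; the second follows since $\pi/2<4$ and each of $t_\bot-t_-$, $t_+-t_\bot$ is at least $(\la^{-1}+\pi)^{-1}$.

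For \eqref{t diff 2} the plan is the same, one order higher. On $[t_-,t_\bot]$ the angle is affine in $s$, so $v''$ is purely centripetal with $|v''(s)|\equiv\big(\frac{\pi/2}{t_\bot-t_-}\big)^2$. On $[t_\bot,t_+]$, using $u''=\phi''=0$, one further differentiation yields Cartesian components $-(\phi')^2\cos\phi$ and $-2(u')^2\sin\phi-4uu'\phi'\cos\phi+u^2(\phi')^2\sin\phi$, of absolute value at most $\frac{(\pi/2)^2}{(t_+-t_\bot)^2}$ and $\frac{2+2\pi+(\pi/2)^2}{(t_+-t_\bot)^2}$; hence $|v''(s)|\le\frac{1}{(t_+-t_\bot)^2}\sqrt{(\pi/2)^4+(2+2\pi+(\pi/2)^2)^2}<\frac{16}{(t_+-t_\bot)^2}$. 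Combining the two pieces and using once more $t_\bot-t_-,\,t_+-t_\bot\ge(\la^{-1}+\pi)^{-1}$ and $(\pi/2)^2<16$ gives $\|v''\|_{L^\8}\le16(\la^{-1}+\pi)^2$.

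I do not expect any genuinely hard step here; the only things to watch are the bookkeeping of the several summands of $v''$ on $[t_\bot,t_+]$ and the convention that $v'$ and $v''$ are understood on $[t_-,t_\bot)\cup(t_\bot,t_+]$ only --- the junction $t_\bot$ being a point across which $v'$ jumps merely by a positive multiplicative constant, as already observed after \eqref{v}, so it is harmless for these supremum-norm bounds.
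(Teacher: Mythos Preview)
Your proposal is correct and follows essentially the same approach as the paper: direct differentiation of the explicit formula \eqref{v} on each subinterval, followed by elementary pointwise bounds using $|\sin|,|\cos|\le1$, $u\in[0,1]$, and then the lower bounds on $t_\bot-t_-$ and $t_+-t_\bot$ from \eqref{lbd}. The only cosmetic difference is that you introduce the abbreviations $u,\phi$ and bound the two Cartesian components separately before taking the Euclidean norm, whereas the paper expands $|v'|^2$ and $|v''|$ directly; the resulting intermediate constants differ slightly but both land comfortably under $4/(t_+-t_\bot)$ and $16/(t_+-t_\bot)^2$.
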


\begin{proof}
We use the same notations from the proof of Theorem \ref{regularity}. Since
\begin{equation*}
v'(s) =\left\{
\begin{array}{cl}
\frac{\pi/2}{t_\bot-t_-}
\left( -\sin\Big( \frac{\pi}{2}(1+\frac{s - t_-}{t_\bot-t_-})\Big) ,\cos\Big( \frac{\pi}{2}(1+\frac{s - t_-}{t_\bot-t_-})\Big)  \right),
& \text{if } s \in [t_-,t_\bot),\\
\Big( \frac{\pi/2}{t_+-t_\bot}\sin\Big( \frac{\pi}{2}(1+\frac{t_+ - s }{t_+-t_\bot})\Big) 
,\frac{\pi/2}{t_+ - t_\bot}\Big(\frac{t_+ - s }{t_+-t_\bot}\Big)^2 \cos\Big( \frac{\pi}{2}(1+\frac{t_+ - s }{t_+-t_\bot})\Big)  
& \\
+2\frac{t_+ - s}{ (t_+-t_\bot)^2 }\sin\Big( \frac{\pi}{2}(1+\frac{t_+ - s}{t_+-t_\bot})\Big)\Big)&\text{if } s \in (t_\bot,t_+],
\end{array}\right.
\end{equation*}
it follows that
\begin{equation*}
|v'( s )|\le \frac{\pi/2}{t_\bot-t_-} \qquad \text{for any } s \in [t_-,t_\bot),
\end{equation*}
and
\begin{align*}
|v'(s)| &= \bigg[ \Big(\frac{\pi/2}{t_+-t_\bot}\Big)^2\sin^2\Big( \frac{\pi}{2}(1+\frac{t_+ - s}{t_+-t_\bot})\Big) 
+ \Big(\frac{\pi/2}{t_+-t_\bot}\Big)^2 \Big (\frac{t_+ - s }{t_+-t_\bot} \Big)^4 \cos^2\Big( \frac{\pi}{2}(1+\frac{t_+ - s }{t_+-t_\bot})\Big) \\
&\qquad+4 \frac{ (t_+ - s)^2 }{ (t_+-t_\bot)^4 } \sin^2\Big( \frac{\pi}{2}(1+\frac{t_+ - s}{t_+-t_\bot})\Big)  \\
&\qquad+4 \frac{\pi/2}{t_+-t_\bot}  \frac{ (t_+ - s)^3 }{( t_+-t_\bot )^4}  \cos\Big( \frac{\pi}{2}(1+\frac{t_+ - s }{t_+-t_\bot})\Big) 
\sin\Big( \frac{\pi}{2}(1+\frac{t_+ - s }{t_+-t_\bot})\Big) \bigg]^{1/2}\\
&\le \bigg[  \Big(\frac{\pi/2}{t_+-t_\bot}\Big)^2 + \frac{4 + \pi}{ ( t_+-t_\bot )^2 }\bigg]^{1/2}
%\le \frac{\pi/2 + 2 }{t_+-t_\bot} 
\le \frac{4 }{t_+-t_\bot}
\end{align*}
for any $s \in (t_\bot,t_+]$. Thus \eqref{t diff} is proven.

\medskip

To prove \eqref{t diff 2}, note that for $s \in [t_-,t_\bot)$ it holds that
\begin{equation*}
v''(s)=-\bigg ( \frac{\pi/2}{t_\bot-t_-}\bigg ) ^2
\left( \cos\Big( \frac{\pi}{2}(1+\frac{s -t_-}{t_\bot-t_-})\Big) ,\sin\Big( \frac{\pi}{2}(1+\frac{s - t_-}{t_\bot-t_-})\Big)  \right),
\end{equation*}
hence
$|v''(s )| \le \bigg ( \dfrac{\pi/2}{t_\bot-t_-}\bigg )^2$. Similarly, for $s \in (t_\bot,t_+]$, it holds that
\begin{align*}
v''(s)&=\bigg( -\bigg ( \frac{\pi/2}{t_+-t_\bot}\bigg )^2\cos\Big( \frac{\pi}{2}(1+\frac{t_+ - s}{t_+-t_\bot})\Big) ,\\
&\qquad\frac{\pi/2}{t_+-t_\bot}\bigg[\frac{-2(t_+ - s)}{  (t_+-t_\bot )^2 }\cos\Big( \frac{\pi}{2}(1+\frac{t_+ - s }{t_+-t_\bot})\Big) 
+ \Big(\frac{t_+ - s }{t_+-t_\bot}\Big)^2 \frac{\pi/2}{t_+-t_\bot}\sin\Big( \frac{\pi}{2}(1+\frac{t_+ - s }{t_+-t_\bot})\Big) \bigg] \\
& 
\qquad -2\frac{t_+ - s }{ ( t_+-t_\bot)^2 }\frac{\pi/2}{t_+-t_\bot}\cos\Big( \frac{\pi}{2}(1+\frac{t_+ - s }{t_+-t_\bot})\Big)
-\frac{2}{ (t_+-t_\bot)^2 }\sin\Big( \frac{\pi}{2}(1+\frac{t_+ - s }{t_+-t_\bot})\Big)\bigg),
\end{align*}
therefore, 
%using the convexity of the norm and 
using \eqref{lbd},
% $t_+-t_\bot\ge (\la^{-1}+2\pi)/\sqrt2$ (proven in the proof of Theorem \ref{main})
\begin{align*}
|v''(s)| & \le \bigg ( \frac{4}{t_+-t_\bot}\bigg )^2 
\le  16 (\la^{-1}+ \pi)^2,
\end{align*}
hence \eqref{t diff 2} is proven.
\end{proof}

The rest of the section contains the proofs of the technical estimates needed in the above proof.

\begin{lemma}\label{K-1 proof}
Under the hypotheses of Theorem \ref{regularity}, assertions \eqref{K-1}, \eqref{K-2} and \eqref{K-4} hold.
\end{lemma}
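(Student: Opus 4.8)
The plan is to dispose of the three assertions one at a time, using that on each of the relevant arcs the competitor $\gm_\vep$ of \eqref{competitor} is obtained from $\gm$ by a completely explicit map --- a translation on $[\vep,t_-]$, a homothety on $[0,\vep]$, and a small anisotropic affine distortion on $[t_+,\H^1(\pd\Omega)]$ --- and by tracking how $\int\kappa^2\,\d\H^1$ transforms under each. Assertion \eqref{K-1} should be essentially immediate: on $[\vep,t_-]$ we have $\gm_\vep=\gm+(0,\gm(\vep)_y)$, so the arc $\gm_\vep([\vep,t_-])$ is an isometric copy of $\gm([\vep,t_-])$; since the squared-curvature integral is intrinsic and invariant under isometries, the two sides of \eqref{K-1} agree.

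For \eqref{K-4}, I would first record that a homothety of ratio $2$ multiplies arclength by $2$ and curvature by $\tfrac12$, so that
\[
\int_{\gm_\vep([0,\vep])}\kappa_{\pd\Omega_\vep}^2\,\d\H^1_{\llcorner\pd\Omega_\vep}
=\tfrac12\int_{\gm([0,\vep])}\kappa_{\pd\Omega}^2\,\d\H^1_{\llcorner\pd\Omega}
=\tfrac12\int_0^\vep|\gm''|^2\,\d s .
\]
Consequently the left-hand side of \eqref{K-4} equals $\tfrac12\int_0^\vep|\gm''|^2\,\d s$, and it remains only to bound $\int_0^\vep|\gm''|^2\,\d s$ below by $M^2\vep$; this is Cauchy--Schwarz applied to the defining relation \eqref{gammaprime}, namely $M^2\vep^2=|\gm'(\vep)-\gm'(0)|^2=|\int_0^\vep\gm''\,\d s|^2\le\vep\int_0^\vep|\gm''|^2\,\d s$.

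The substantive part is \eqref{K-2}. Here the competitor on $[t_+,\H^1(\pd\Omega)]$ is the affine map $(\xi,\eta)\mapsto(a\xi+b,\eta)$ applied to $\gm$, with $a=1+\delta$ and $\delta=\gm(0)_x/(\gm(0)_x-\gm(t_+)_x)$, and the key is to show $\delta=O(\vep^{3/2})$, not just the naive $O(\vep)$. The numerator is small because $\gm'(0)_x=0$: Hölder on $[0,t]$ gives $|\gm'(t)_x|\le\sqrt{\vep\,\E(\Omega)/\la}$ for $t\le\vep$, and since $\gm(\vep)_x=0$ by \eqref{Cartesian} this forces $\gm(0)_x=|\int_0^\vep\gm'(t)_x\,\d t|\le\vep\sqrt{\vep\,\E(\Omega)/\la}=O(\vep^{3/2})$ (the energy of the minimizer being bounded by \eqref{inf E}). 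The denominator is bounded below by an explicit positive constant: near $t_+$, Hölder gives $|\gm'(t)-\gm'(t_+)|^2\le(t-t_+)\,\E(\Omega)/\la$, hence $\gm'(t)_x\ge\tfrac12$ on $[t_+,t_++\tau_0]$ for $\tau_0:=\min\{\la/(4\E(\Omega)),\,\H^1(\pd\Omega)-t_+\}$, which is bounded away from $0$ by \eqref{lbd}; since the tangent angle is nondecreasing on $[t_+,\H^1(\pd\Omega)]$ we have $\gm'(t)_x\ge0$ throughout, so $\gm(0)_x-\gm(t_+)_x=\int_{t_+}^{\H^1(\pd\Omega)}\gm'(t)_x\,\d t\ge\tau_0/2$. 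Combining, $\delta=O(\vep^{3/2})$. Finally, a direct computation with $\gm'=(\cos\th,\sin\th)$ shows the image curve has speed $g:=\sqrt{a^2\cos^2\th+\sin^2\th}$ and curvature $a\kappa_{\pd\Omega}/g^3$, so
\[
\int_{\gm_\vep([t_+,\H^1(\pd\Omega)])}\kappa_{\pd\Omega_\vep}^2\,\d\H^1_{\llcorner\pd\Omega_\vep}
=\int_{t_+}^{\H^1(\pd\Omega)}\frac{a^2}{g^5}\,|\gm''|^2\,\d s ;
\]
since $|a^2/g^5-1|\le C\delta$ uniformly in $\th$ (because $a$ is close to $1$ and $g$ stays bounded away from $0$ and $\8$), the left-hand side of \eqref{K-2} is at most $C\delta\int_{t_+}^{\H^1(\pd\Omega)}|\gm''|^2\,\d s\le C\delta\,\E(\Omega)/\la=O(\vep^{3/2})$.

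I expect the main obstacle to be precisely the $O(\vep^{3/2})$ bound on $\delta$: unlike the other two arcs, the competitor here is genuinely distorted, and one must (i) harvest the extra half-power of $\vep$ from the combination $\gm'(0)_x=0$ and $\gm(\vep)_x=0$, and (ii) pin down a quantitative lower bound on $\gm(0)_x-\gm(t_+)_x$, for which the Hölder control of the tangent near $t_+$ together with the length estimate \eqref{lbd} is essential. The anisotropic-scaling curvature identity itself is routine once the geometry is in place, and \eqref{K-1} and \eqref{K-4} are short.
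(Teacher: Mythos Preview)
Your proposal is correct and follows essentially the same approach as the paper: translation invariance for \eqref{K-1}, the scaling law for homotheties plus Cauchy--Schwarz for \eqref{K-4}, and for \eqref{K-2} the same two key estimates --- $\gm(0)_x=O(\vep^{3/2})$ from H\"older and $\gm'(0)_x=0$, and a uniform positive lower bound on $\gm(0)_x-\gm(t_+)_x$ from the H\"older control of the tangent near $t_+$ --- yielding $\delta=O(\vep^{3/2})$. Your use of the closed-form anisotropic-scaling curvature identity $\kappa_{\pd\Omega_\vep}=a\kappa_{\pd\Omega}/g^3$ and $\d\H^1_{\llcorner\pd\Omega_\vep}=g\,\d s$ is a slightly cleaner packaging of the same computation the paper carries out by direct expansion of $|\gm_\vep'|^{-4}$ and $\langle\gm_\vep'',\gm_\vep'\rangle$, but the substance is identical.
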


\begin{proof}
We use the same notations from the proof of Theorem \ref{regularity}.

\medskip

{\em Proof of \eqref{K-1}}. By construction, for any $t\in [\vep,t_-]$, $\gm_{\vep}(t)$ differ from
$\gm(t)$ by a translation, 
thus the curvature of these two segments are always equal, hence \eqref{K-1}. 

\medskip

{\em Proof of \eqref{K-2}}.  
For $t\in [t_+,\H^1(\pd \Omega)]$ we have
\begin{align*}
\gm_\vep(t)&=\left(  \gm(t)_x\left(1+\frac{\gm(0)_x}{\gm(0)_x-\gm(t_+)_x}\right)
-\frac{\gm(t_+)_x\gm(0)_x}{\gm(0)_x-\gm(t_+)_x} ,\gm(t)_y\right), \\
\gm_\vep'(t)&=\left(\gm'(t)_x\left(1+\frac{\gm(0)_x}{\gm(0)_x-\gm(t_+)_x}\right)
 ,\gm'(t)_y \right),\\
\gm_\vep''(t)&=\left(\gm '' (t)_x\left(1+\frac{\gm(0)_x}{\gm(0)_x-\gm(t_+)_x}\right)
 ,\gm '' (t)_y \right).
\end{align*}
%For the dilations, we first consider in $y$ direction.
%We first prove
%
%Recall that $\gm'(t_+)=(1,0)$, $\gm'(0)=\gm'(\H^1(\pd R))=(0,1)$. Let $t^*\in (t_+,\H^1(\pd R))$ be the time such that $\gm'(t^*)
%=(\sqrt3/2,1/2)$, and note that
%\begin{align*}
%\frac{\E(R)}\la &\ge  \int_{t^*}^{\H^1(\pd R)} |\gm''|^2\d t\ge \frac{|\gm'(\H^1(\pd R))-\gm'(t^*)|^2}{\H^1(\pd R)-t^*}
%=\frac1{\H^1(\pd R)-t^*}
%\Lra\H^1(\pd R)-t^*\ge\frac\la{\E(R)}.
%\end{align*}
%Since $\gm'(t)_y\ge 1/2$ for any $t\in (t^*,\H^1(\pd R))$, and $\gm(0)_y=0$, it follows
%$\gm(t^*)_y\le -\la/2\E(R)$. We have also $\gm'(t)_y\ge 0$ for any $t\in (t_+,\H^1(\pd R))$, hence
%$\gm(t_+)_y\le -\la/2\E(R)$. This gives
%\begin{equation*}
%\frac{\gm(\vep)_y}{\gm(t_+)_y} = \frac{\vep}{\gm(t_+)_y}+O(\vep^{3/2}).
%\end{equation*}
%Then, to estimate
%\begin{equation*}
%\int_{t_+}^{\H^1(\pd R)} \bigg(\bigg|\frac{\d}{\d t}\bigg(\frac{\gm_\vep'}{|\gm_\vep'|}\bigg) \bigg|^2-|\gm''|^2 \bigg)\d t
%\end{equation*}
%direct computation gives
%\begin{align*}
%\int_{t_+}^{\H^1(\pd R)} \bigg|\frac{\d}{\d t}\bigg(\frac{\gm_\vep'}{|\gm_\vep'|}\bigg) \bigg|^2\d t =
%\int_{t_+}^{\H^1(\pd R)} \bigg|\frac{\gm_\vep''}{|\gm_\vep'|}\bigg) \bigg|^2\d t
%\end{align*}
%
%\medskip
%
%
We claim
\begin{align}
\gm(0)_x \le \int_0^\vep |\gm'(t)_x| \d t\leq \vep^{3/2} \sqrt{ \E(\Omega)/\la},\qquad
\gm(0)_x - \gm(t_+)_x \ge \frac\la{2\E(\Omega)} \label{K-2-2}.
\end{align}
In view of \eqref{Cartesian}, and noting that for any $t\in[0,\vep]$ it holds that
\begin{align*}
\frac{\E(\Omega)}\la&\geq\int_{\gm([0,t])} \kappa_{\pd \Omega}^2\d\H^1_{\llcorner \pd \Omega} 
=\int_0^t |\gm''|^2 \d s \geq \frac{|\gm'(t)-\gm'(0)|^2}{\vep} \geq \frac{|\gm'(t)_x|^2}{\vep}\\
&\Lra |\gm'(t)_x| \leq \sqrt{\vep \E(\Omega)/\la},
\end{align*}
it follows that
\begin{equation*}
|\gm(0)_x-\gm(\vep)_x|=|\gm(0)_x| \le \int_0^\vep |\gm'(t)_x| \d t\leq \vep^{3/2} \sqrt{ \E(\Omega)/\la}.
\end{equation*}
Now recall that by construction $\gm'(t_+)=(1,0)$,
$\gm'(\H^1(\pd \Omega))=\gm'(0)=(0,1)$, $|\gm'|\equiv 1$ for a.e. $t$, and
 let $\tau\in(t_+,\H^1(\pd \Omega))$ be time for which $\gm'(\tau)=(1/2,\sqrt{3}/2)$. Thus
\begin{align*}
\frac{\E(\Omega)}\la&\geq\int_{\gm([t_+,\tau])} \kappa_{\pd \Omega}^2\d\H^1_{\llcorner \pd \Omega} 
=\int_{t_+}^\tau |\gm''|^2 \d s \geq \frac{|\gm'(\tau)-\gm'(t_+)|^2}{\tau-t_+} = \frac1{\tau-t_+}\\
&\Lra  \tau-t_+\geq \la/\E(\Omega),
\end{align*}
and since $\gm_x'\geq 0$ on $[t_+,\H^1(\pd \Omega)]$, and $\gm_x'\geq 1/2$ for all $t\in [t_+,\tau]$, it follows that
$\gm(0)_x-\gm(t_+)_x \geq  \gm(\tau)_x-\gm(t_+)_x \geq   \frac{\la}{2\E(\Omega)}$, hence \eqref{K-2-2} is proven.
Consequently,
\begin{equation*}
\bigg| \frac{\gm(0)_x}{\gm(0)_x - \gm(t_+)_x} \bigg| \leq 2(\vep \E(\Omega)/\la)^{3/2} =O(\vep^{3/2}).
\end{equation*}
Therefore,
\begin{align*}
|\gm_\vep'|^{-4} &= \left(|\gm_x'|^2 \Big(1+\frac{\gm(0)_x}{\gm(0)_x-\gm(t_+)_x}\Big)^2+
 |\gm_y'|^2\right)^{-2}\\
 &=\left(1+|\gm_x'|^2 \frac{2\gm(0)_x}{\gm(0)_x-\gm(t_+)_x} +|\gm_x'|^2 \bigg ( \frac{\gm(0)_x}{\gm(0)_x - \gm(t_+)_x} \bigg )^2 
\right)^{-2} = 1+ O(\vep^{3/2}).
\end{align*}

Observe that for $t\in [t_+,\H^1(\pd \Omega)]$ we have
\begin{eqnarray*}
%\int_{\gm_{\vep}([t_+,\H^1(\pd \Omega)]) } \kappa_{\pd \Omega_\vep}^2\d\H^1_{\llcorner \pd \Omega_\vep} 
\kappa_{\pd \Omega_\vep} = \frac{\left |  \frac{\gm_\vep ''}{ | \gm_\vep' |  }  -  \gm_\vep'  \frac{  \langle \gm_{\vep}'', \gm_\vep' \rangle }{ | \gm_\vep' |^3}    \right |}{ | \gm_\vep' |},
\end{eqnarray*}
\begin{eqnarray*}
\langle \gm_{\vep}'', \gm_\vep'  \rangle = \gm_x' \gm_x''  \left(  1+\frac{\gm(0)_x}{\gm(0)_x-\gm(t_+)_x}   \right)^2 + \gm_y' \gm_y'' = \langle \gm', \gm'' \rangle + O(\vep^{3/2}) = O(\vep^{3/2}).
\end{eqnarray*}
 Here we use the fact that
\begin{equation}
\langle \gm'' ,\gm' \rangle =\frac12\frac{\d}{\d t}|\gm'|^2=0, \label{est1}
\end{equation}
since $\gm$ is parameterized by arc-length.
And we have
\begin{align*}
\int_{t_+}^{\H^1(\pd \Omega)} \frac{|\gm_\vep''|^2}{|\gm_\vep'|^{4}} \d t 
&=\int_{t_+}^{\H^1(\pd \Omega)}  \bigg(|\gm''|^2+|\gm_x''|^2\frac{2\gm(0)_x}{\gm(0)_x-\gm(t_+)_x} +|\gm_x''|^2  \left (\frac{2\gm(0)_x}{\gm(0)_x-\gm(t_+)_x} \right )^2 \bigg)(1+ O(\vep^{3/2})) \d t \\
&=\int_{t_+}^{\H^1(\pd \Omega)}  |\gm''|^2 \d t +O(\vep^{3/2}) ,
\end{align*}
hence \eqref{K-2}.

\medskip

{\em Proof of \eqref{K-4}}. In the time interval $[0,\vep]$, the competitor is obtained by scaling  by a factor of 2,
and direct computations give that the integrated squared curvature scales by a factor of $1/2$. 
Thus based on \eqref{gammaprime} we get
\begin{align}
\int_{0}^\vep \bigg|\frac{\d}{\d t}\bigg(\frac{\gm'}{|\gm'|}\bigg)\bigg|^2\ 
dt
%d\H^1_{\llcorner \pd \Omega} 
&-
 \frac{1}{2}  \int_{0}^{\vep} \bigg|\frac{\d}{\d t}\bigg(\frac{\gm_\vep'}{|\gm_\vep'|}\bigg)\bigg|^2\
% d\H^1_{\llcorner \pd \Omega_\vep} 
dt
 \notag\\
 &  =
\frac12   \int_{0}^\vep \bigg|\frac{\d}{\d t}\bigg(\frac{\gm'}{|\gm'|}\bigg)\bigg|^2\
%d\H^1_{\llcorner \pd \Omega}
dt 
= \frac12   \int_{0}^\vep \left |\gm'' \right |^2
\geq\frac{M^2}2\vep ,
\label{tvep}
\end{align}
hence \eqref{K-4}.
\end{proof}

\begin{lemma}\label{K-3 proof}
Under the hypotheses of Theorem \ref{regularity}, assertion \eqref{K-3} holds.
\end{lemma}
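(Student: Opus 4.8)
The plan is to compare the bending energy of the modified arc $\gm_\vep|_{[t_-,t_+]} = \gm + \gm(\vep)_y\, v$ with that of $\gm|_{[t_-,t_+]}$, tracking everything to order $\vep$. First I would recall from \eqref{gmepy} that $\gm(\vep)_y = \vep + O(\vep^{3/2})$, so the perturbation $\gm(\vep)_y\, v$ has amplitude of order $\vep$, with $v, v', v''$ all bounded by the explicit constants in \eqref{t diff}--\eqref{t diff 2}. Since $\gm_\vep$ is not arc-length parametrized on $[t_-,t_+]$, I would work with the parametrization-invariant curvature formula
\begin{equation*}
\kappa_{\pd\Omega_\vep}^2\,|\gm_\vep'| = \frac{|\gm_\vep''|^2\,|\gm_\vep'|^2 - \langle \gm_\vep'',\gm_\vep'\rangle^2}{|\gm_\vep'|^5},
\end{equation*}
integrate it in $t$ over $[t_-,t_+]$, and expand. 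Writing $\gm_\vep' = \gm' + \gm(\vep)_y\,v'$ and $\gm_\vep'' = \gm'' + \gm(\vep)_y\,v''$, and using $|\gm'|\equiv 1$, $\langle\gm',\gm''\rangle=0$ (as in \eqref{est1}), the numerator and denominator each expand as their $\gm$-values plus $\gm(\vep)_y$ times an explicit cross term plus $O(\gm(\vep)_y^2) = O(\vep^2)$. The linear-in-$\vep$ contribution is the quantity that must be bounded by $\vep C_2$.

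The key steps, in order: (i) lower-bound $t_+ - t_-$, $t_\bot - t_-$, $t_+ - t_\bot$ using \eqref{lbd}, so that all denominators appearing in $v', v''$ are controlled and $|v'|\le 4(\la^{-1}+\pi)$, $|v''|\le 16(\la^{-1}+\pi)^2$; (ii) expand $\int_{t_-}^{t_+}\kappa_{\pd\Omega_\vep}^2|\gm_\vep'|\,dt$ to first order in $\gm(\vep)_y$, collecting the cross terms, which involve $\int \langle\gm'',v''\rangle$, $\int |\gm''|^2\langle\gm',v'\rangle$, $\int\langle\gm'',v'\rangle\langle\gm'',\gm'\rangle$, etc.; (iii) bound each cross term by pulling out the $L^\infty$ bounds on $v', v''$ and using $\int_{t_-}^{t_+}|\gm''|^2 \le \E(\Omega)/\la \le 2(\la^{-1}+\pi)$ from \eqref{inf E2} together with $\int_{t_-}^{t_+}|\gm''| \le \sqrt{(t_+-t_-)}\,\bigl(\int|\gm''|^2\bigr)^{1/2}$ via Cauchy--Schwarz, the length $t_+ - t_-$ being bounded by $\H^1(\pd\Omega) \le \pi\,\diam(\Omega) \le \pi C_1$ (using Lemma \ref{perimeter vs diameter convex sets} and \eqref{diam-h}); (iv) collect constants to match $C_2 = 32(\la^{-1}+\pi)^2 + 32\sqrt{2C_1\pi}\,(\la^{-1}+\pi)^{5/2}$, absorbing all $O(\vep^2)$ and $O(\vep^{3/2})$ remainders into the $O(\vep^{3/2})$ error term. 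I would also verify continuity of $\~\gm_\vep'$ at the junction $t = t_\bot$, already noted in the main proof (the one-sided limits of $v'$ are parallel), so that no Dirac mass in the curvature is introduced there.

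The main obstacle I expect is the bookkeeping in step (iii): there are several cross terms of mixed type (some weighted by $|\gm''|^2$, which is only $L^1$, others by $\langle\gm'',\gm'\rangle$, which vanishes identically, others purely by $\gm''$), and one must be careful which are controlled by $\|\gm''\|_{L^2}^2$ versus $\|\gm''\|_{L^1}$ — the latter forcing the appearance of $\sqrt{C_1\pi}$ through the length bound and Cauchy--Schwarz, which is exactly the source of the second, fractional-power term in $C_2$. A secondary subtlety is that the piecewise definition of $v$ on $[t_-,t_\bot]$ versus $[t_\bot,t_+]$ means the expansion must be carried out on each piece separately and then summed, but since $v$ and $v'$ are continuous across $t_\bot$ this causes no boundary contribution.
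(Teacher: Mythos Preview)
Your proposal is correct and follows essentially the same route as the paper: expand the curvature of $\gm_\vep=\gm+\gm(\vep)_y v$ to first order in $\gm(\vep)_y=\vep+O(\vep^{3/2})$ using $|\gm'|\equiv1$ and $\langle\gm',\gm''\rangle=0$, pull out $\|v'\|_{L^\8}$ and $\|v''\|_{L^\8}$ via \eqref{t diff}--\eqref{t diff 2}, control $\int|\gm''|^2$ by $\E(\Omega)/\la$ and $\int|\gm''|$ by Cauchy--Schwarz together with the length bound $\H^1(\pd\Omega)\le\pi C_1$, and collect constants to recover $C_2$. The only cosmetic difference is that you write the integrand as the single expression $(|\gm_\vep''|^2|\gm_\vep'|^2-\langle\gm_\vep'',\gm_\vep'\rangle^2)/|\gm_\vep'|^5$, whereas the paper splits $\kappa_{\pd\Omega_\vep}$ into the two pieces $\gm_\vep''/|\gm_\vep'|^2$ and $\gm_\vep'\langle\gm_\vep'',\gm_\vep'\rangle/|\gm_\vep'|^4$, shows the latter contributes only $O(\vep^2)$, and then expands $\int|\gm_\vep''|^2/|\gm_\vep'|^4\,dt$ exactly as you describe.
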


\begin{proof}
We use the same notations from the proof of Theorem \ref{regularity}.
 In the time interval $[t_-,t_+]$, $\gm_\vep$ is given by
\begin{equation*}
\gm_\vep(t)=\gm(t)+\gm(\vep)_y v(t) ,\qquad t\in [t_-,t_+].
\end{equation*}
Note first that since $\Omega$ is a minimizer of $\E$, it must hold
$$ \int_{ \gm([t_-,t_+])} \kappa_{\pd \Omega}^2\d\H^1_{\llcorner \pd \Omega}  <+\8,$$
and recalling our definition of integrated squared curvature in \eqref{K2}, it follows that the Radon-Nikodym derivative
$\frac{\d\kappa_{\pd \Omega}}{\d\H^1_{\llcorner \pd \Omega}} $ is square integrable. In terms of the parameterization
$\gm_\vep$, this gives
$$ 
 \frac{1 }{|\gm_\vep'|}  
\frac{\d}{\d t}\bigg(\frac{\gm_\vep'}{|\gm_\vep'|} \bigg)=
 \frac{1 }{|\gm_\vep'|}   \left (
\frac{\gm_\vep''}{|\gm_\vep'|} - \gm_\vep'\frac{\langle \gm_\vep'',\gm_\vep'\rangle}{|\gm_\vep'|^3} \right )
 \in L^2(0,\H^1(\pd \Omega);\R).$$
Recall \eqref{gmepy}, that is, $\gm(\vep)_y=\vep+O(\vep^{3/2})$, and
as $\gm$ is parameterized by arc-length (i.e., $|\gm' |=1$ for a.e. $t$), and $v$ was defined
in \eqref{v} (in particular, $|v'|$ was uniformly bounded from above), it follows that
\begin{equation*}
|\gm_\vep' | = \sqrt{1+2\vep\langle \gm',v'\rangle +O(\vep^{3/2})}.
\end{equation*}
Then, for any $\al\in \R$ and sufficiently small $\vep$, we have 
\begin{equation}
|\gm_\vep' |^\al = 1+\al\vep\langle \gm',v'\rangle +O(\vep^{3/2}).\label{power}
\end{equation}
Calculation shows that
\begin{align*}
\frac{1 }{|\gm_\vep'|} 
\frac{\d}{\d t}\bigg(\frac{\gm_\vep'}{|\gm_\vep'| } \bigg)&=
\frac{\gm_\vep''}{|\gm_\vep'|^2 } - \gm_\vep'\frac{\langle \gm_\vep'',\gm_\vep'\rangle}{|\gm_\vep'|^4} \\
&=
\frac{\gm''+\vep v''}{|\gm_\vep'|^2 }
-\frac{\gm_\vep'}{|\gm_\vep'|^4 }\big(\langle \gm'' ,\gm' \rangle 
+\vep^2\langle  v'', v'\rangle+ \vep\langle  \gm'', v'\rangle+ \vep\langle  \gm', v''\rangle\big) + \text{ higher order terms}.
\end{align*}
Based on \eqref{est1}, we observe:
\begin{enumerate}

\item As both $|v'|$ and $|v''|$ are uniformly bounded from above, the term $\vep^2\langle  v'', v'\rangle$ 
is of order $O(\vep^{2})$.

\item The norm of $\vep\gm_\vep' \langle  \gm', v''\rangle/|\gm_\vep'|^4$ is estimated by
\begin{equation}
\vep \bigg|\frac{\gm_\vep' \langle  \gm', v''\rangle}{|\gm_\vep'|^4 }\bigg| 
\leq \vep \frac{|\gm'|\cdot|v''|}{|\gm_\vep'|^3 } \leq \vep\|v''\|_{L^\8}+O(\vep^{2}).\label{est2}
\end{equation}

\item The norm of $\vep\gm_\vep' \langle  \gm'', v'\rangle/|\gm_\vep'|^4 $ is estimated by
\begin{equation}
\vep \bigg|\frac{\gm_\vep' \langle  \gm'', v'\rangle}{|\gm_\vep'|^4}\bigg| 
\leq \vep \frac{|\gm''|\cdot|v'|}{|\gm_\vep'|^3}. \label{est3}
\end{equation}
\end{enumerate}
Thus combining \eqref{est1}, \eqref{est2} and \eqref{est3} gives
\begin{align*}
\int_{ t_-}^{t_+}
\bigg| \gm_\vep'&\frac{\langle \gm_\vep'',\gm_\vep'\rangle}{|\gm_\vep'|^4} \bigg|^2\d t
=
\int_{ t_-}^{t_+} \bigg| \frac{\gm_\vep'}{|\gm_\vep'|^4 }\big(\langle \gm'' ,\gm' \rangle 
+\vep^2\langle  v'', v'\rangle+ \vep\langle  \gm'', v'\rangle+ \vep\langle  \gm', v''\rangle\big)\bigg|^2\d t  + \text{ higher order terms }\\
&\leq 
\int_{ t_-}^{t_+} |\gm_\vep'|^{-6}\big|
 \vep\langle  \gm'', v'\rangle+ \vep\langle  \gm', v''\rangle\big|^2\d t +O(\vep^{3})\\
 &\leq
2 \int_{ t_-}^{t_+} |\gm_\vep'|^{-6}
 (|\vep\langle  \gm'', v'\rangle|^2+| \vep\langle  \gm', v''\rangle|^2)\d t +O(\vep^{3})\\
&=2\vep^2 \int_{ t_-}^{t_+} |\gm_\vep'|^{-6}
\left (  | \langle  \gm'', v'\rangle|^2  +  | \langle  \gm', v''\rangle|^2   \right ) \d t +O(\vep^{3}) \\
& \leq 2\vep^2 \|v'\|_{L^\8}^2\int_{ t_-}^{t_+} |\gm_\vep'|^{-6}
| \gm'' |^2\d t +O(\vep^{2}).
\end{align*}
In view of \eqref{power}, we get 
\begin{equation*}
\int_{ t_-}^{t_+} |\gm_\vep'|^{-6} 
| \gm'' |^2\d t \leq 2 \int_{ t_-}^{t_+} 
| \gm'' |^2\d t \leq 2\int_{\pd \Omega}\kappa_{\pd \Omega}^2\d\H^1_{\llcorner \partial \Omega}<+\8,
\end{equation*}
hence 
\begin{equation*}
2\vep^2 \|v'\|_{L^\8}^2\int_{ t_-}^{t_+} |\gm_\vep'|^{-6}
| \gm'' |^2\d t \le O(\vep^{2}).
\end{equation*}
Thus 
\begin{equation*}
\int_{ t_-}^{t_+}
\bigg| \gm_\vep'\frac{\langle \gm_\vep'',\gm_\vep'\rangle}{|\gm_\vep'|^4 } \bigg|^2\d t \le O(\vep^{2}),
\end{equation*}
 and
\begin{align*}
\int_{ \gm([t_-,t_+])} \kappa_{\pd \Omega_\vep}^2\d\H^1_{\llcorner \pd \Omega_\vep} 
&= \int_{ t_-}^{t_+} \bigg|  \frac{1}{|\gm_\vep'|}   \frac{\d}{\d t}\bigg(\frac{\gm_\vep'}{|\gm_\vep'|} \bigg)\bigg|^2\d t
=
\int_{ t_-}^{t_+}
\bigg|\frac{\gm_\vep''}{|\gm_\vep'|^2} - \gm_\vep'\frac{\langle \gm_\vep'',\gm_\vep'\rangle}{|\gm_\vep'|^4} \bigg|^2\d t\\
&=\int_{ t_-}^{t_+}
\bigg|\frac{\gm_\vep''}{|\gm_\vep'|^2} \bigg|^2\d t+O(\vep^{2}).
\end{align*}
Again, in view of \eqref{power},
it follows that
\begin{align}
\int_{ t_-}^{t_+}
&\bigg|\frac{\gm_\vep''}{|\gm_\vep'|^2} \bigg|^2\d t = \int_{ t_-}^{t_+} \bigg|\frac{\gm''+\vep v''}{|\gm_\vep'|^2 } \bigg|^2\d t \notag \\
&=\int_{ t_-}^{t_+} (\langle \gm''+\vep v'',\gm''+\vep v'' \rangle)(1-4\vep\langle \gm',v'\rangle +O(\vep^{3/2})) \d t  + \text{ higher order terms } \notag\\
&=\int_{ t_-}^{t_+} (|\gm''|^2+2\vep\langle \gm'', v'' \rangle +\vep^2|v''|)(1-
4\vep\langle \gm',v'\rangle +O(\vep^{3/2})) \d t    + \text{ higher order terms } \notag\\
&\le
(1+4\vep\|v'\|_{L^\8})\int_{ t_-}^{t_+} |\gm''|^2 \d t + 2\vep \|v''\|_{L^\8}\int_{ t_-}^{t_+}  |\gm''| \d t
 +O(\vep^{3/2}) \notag
\end{align}
\begin{align}
&\le
(1+4\vep\|v'\|_{L^\8})\int_{ t_-}^{t_+} |\gm''|^2 \d t + 2\vep \|v''\|_{L^\8}  \bigg(\H^1(\pd \Omega)
\int_{ t_-}^{t_+}  |\gm''|^2 \d t\bigg)^{1/2}
+ O(\vep^{3/2}) \notag\\
&\leq
\int_{ t_-}^{t_+} |\gm''|^2 \d t  +4\vep \|v'\|_{L^\8}\E(\Omega)/\la+  2\vep \|v''\|_{L^\8} \sqrt{\H^1(\pd \Omega)\E(\Omega)/\la}
+ O(\vep^{3/2}). \label{in1} 
\end{align}
Note that
\begin{align*}
 4 \|v'\|_{L^\8}\E(\Omega)/\la&+  2 \|v''\|_{L^\8} \sqrt{\H^1(\pd \Omega)\E(\Omega)/\la} \\
&\le  
32 (\la^{-1}+ \pi)^2 + 32 \sqrt{2 C_1 \pi }  (\la^{-1} + \pi)^{5/2}
= C_2
\end{align*}
in view of \eqref{C_2}, \eqref{diam-h}, \eqref{t diff}, \eqref{t diff 2}, and Lemma \ref{perimeter vs diameter convex sets}.
Hence \eqref{K-3} follows from \eqref{in1}.
\end{proof}

\section{Conclusion}\label{sec:conclusion}

In this paper we investigated the minimization problem for the average distance functional defined for a two-dimensional domain with respect to its boundary, subject to a penalty proportional to
the Euler elastica of the boundary. We proved the existence and $C^{1,1}$-regularity of minimizers, mainly relying on the  method of contradictions by constructing suitable competitors. Echoing the large amount of existing studies that 
have exclusively focused on either the 1D average distance problem or the 2D Willmore energy question, by considering  variational problems associated with  combined energy functional, this study enriches and deepens our understanding of penalized average distance problem. Questions on the exact shape of a minimizer are still open and worth investigating in future. Limiting behaviors of the minimizers, property scaled with respect to $\lambda$, as  $\lambda\to 0$ and $\lambda\to \infty$
may also shed light to this class of free boundary problems.

\vskip 1cm

\noindent{\Large \bf Acknowledgement} The authors would like to thanks the referees for their careful reading  and for their valuable suggestions to improve the quality of the manuscript.

\appendix
\renewcommand{\theequation}{A.\arabic{equation}}
\renewcommand{\thelemma}{A.\arabic{lemma}}
\setcounter{equation}{0}

\vskip 1cm

\noindent{\Large \bf Appendix A}
\\

	Here we collect some results about convex sets, convergence in $\A$,
	and their effect on geometric quantities, such as perimeter, diameter, etc..
	One elementary yet crucial observation is that, given a two-dimensional convex set $\om\in \A$,
	then every point $x\in \A$ has a set $U\sse \om$ of {\em positive} area 
	containing $x$.
	
\begin{lemma}\label{perimeter vs diameter convex sets}
 {\cite[p. 1]{SG}} Let $n \geq 2$ and let $A,  B \subset \mathbb{R}^n$  be two convex bodies (i.e., compact convex sets with non-empty interior). If $A \subset B$, then the monotonicity of perimeters holds, i.e.
\begin{eqnarray}
\H^{n-1} (\pd A)\leq \H^{n-1} (\pd B ).
\end{eqnarray}
As a consequence, since any set with diameter $d$ is contained
in a ball of diameter $d$, we have
\begin{equation*}
\H^1(\pd \Omega)\leq \pi \diam (\Omega), \qquad \text{for all } \Omega\in \A.
\end{equation*}
\end{lemma}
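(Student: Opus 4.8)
The plan is to prove the monotonicity by means of the nearest-point (metric) projection $P_A\colon\R^n\to A$, which sends each $x$ to the unique closest point of the convex body $A$. This map is well defined and $1$-Lipschitz (nonexpansive) precisely because $A$ is closed and convex, and since $1$-Lipschitz maps do not increase $(n-1)$-dimensional Hausdorff measure, one has $\H^{n-1}(P_A(E))\le\H^{n-1}(E)$ for every set $E$. The whole argument then reduces to the single geometric claim
\[
\pd A\sse P_A(\pd B),
\]
for once this is granted, monotonicity of $\H^{n-1}$ together with the nonexpansiveness gives
\[
\H^{n-1}(\pd A)\le\H^{n-1}\big(P_A(\pd B)\big)\le\H^{n-1}(\pd B),
\]
which is exactly the assertion.

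To establish the inclusion $\pd A\sse P_A(\pd B)$, I would fix $a\in\pd A$ and produce a boundary point of $B$ that projects onto it. Since $A$ is a convex body, there is a supporting hyperplane at $a$ with outward unit normal $\nu$. If $a\in\pd B$ we are done, as $P_A(a)=a$; otherwise $a$ lies in the interior of $B$, and because $B$ is bounded the ray $\{a+t\nu:t\ge 0\}$ meets $\pd B$ at a first point $b=a+t_0\nu$ with $t_0>0$. The supporting-hyperplane inequality $\langle a'-a,\nu\rangle\le 0$ for all $a'\in A$ then yields, after expanding $|b-a'|^2=|b-a|^2+2t_0\langle\nu,a-a'\rangle+|a-a'|^2$, that $|b-a'|\ge|b-a|$ with equality only at $a'=a$; hence $P_A(b)=a$. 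This is the step I expect to require the most care, since it is where convexity, the supporting hyperplane, and the boundedness of $B$ (which guarantees the ray exits) all enter; the $1$-Lipschitz property of $P_A$ and the behaviour of $\H^{n-1}$ under Lipschitz maps are standard and may be quoted.

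Finally, for the planar corollary I would point out that crude ball containment is not quite enough to reach the stated constant: from $\Omega\sse B(x,\diam(\Omega))$ the monotonicity only yields $\H^1(\pd\Omega)\le 2\pi\diam(\Omega)$. To obtain the sharp bound $\H^1(\pd\Omega)\le\pi\diam(\Omega)$ I would instead invoke Cauchy's formula, which expresses the perimeter of a planar convex body as the integral of its width over a half-turn,
\[
\H^1(\pd\Omega)=\int_0^\pi w(\theta)\d\theta,
\]
where $w(\theta)$ is the distance between the two supporting lines orthogonal to the direction $\theta$. Since every width satisfies $w(\theta)\le\diam(\Omega)$, integrating over $[0,\pi)$ immediately gives $\H^1(\pd\Omega)\le\pi\diam(\Omega)$. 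This is sharp, being an equality for any set of constant width $\diam(\Omega)$, such as the disk of that diameter, so no argument based merely on enclosing $\Omega$ in a ball can recover the constant $\pi$ without the width integral.
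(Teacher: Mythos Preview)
Your argument is correct, and in fact more complete than what the paper offers: the paper does not prove this lemma at all but simply cites \cite{SG} for the monotonicity statement. Your nearest-point projection proof is one of the standard routes (and is essentially the one in the cited reference): the key inclusion $\pd A\sse P_A(\pd B)$ via the outward normal ray is established cleanly, and the rest follows from the $1$-Lipschitz property of $P_A$ and the behaviour of Hausdorff measure under Lipschitz maps.

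Where your write-up genuinely improves on the paper is the planar corollary. The paper's stated justification, ``any set with diameter $d$ is contained in a ball of diameter $d$'', is simply false (an equilateral triangle of side $1$ has diameter $1$ but circumradius $1/\sqrt{3}>1/2$); containment in a ball of \emph{radius} $d$ is trivially true but only yields $\H^1(\pd\Omega)\le 2\pi\diam(\Omega)$, not the claimed $\pi\diam(\Omega)$. Your replacement via Cauchy's formula $\H^1(\pd\Omega)=\int_0^\pi w(\theta)\d\theta$ together with $w(\theta)\le\diam(\Omega)$ is the right fix and gives the sharp constant. So the monotonicity part matches the cited source, while for the corollary your argument is the correct one and the paper's one-line deduction is in error.
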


\begin{lemma}
	Consider a sequence $\om_n \sse \A$, 
 converging to $\om\in \mathcal{A}$ in the topology
	of $\mathcal{A}$, such that $\bigcup_n \om_n \sse K	$
	for some compact set $K$,.
	Then $\diam (\om_n)\to \diam (\om)$.
\end{lemma}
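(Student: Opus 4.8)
The plan is to establish the two one-sided bounds $\liminf_n\diam(\om_n)\ge\diam(\om)$ and $\limsup_n\diam(\om_n)\le\diam(\om)$. Both rely only on the facts that convergence in $\A$ means $\H^2(\om_n\triangle\om)\to 0$, that the convex body $\om$ has nonempty interior (so that $\mathrm{int}(\om)$ is dense in $\om$, as recorded above), and that $\om\sse K$ — the last because any $p\in\mathrm{int}(\om)$ carries a ball $B(p,r)\sse\om$ with $\H^2(B(p,r)\setminus\om_n)\le\H^2(\om\setminus\om_n)\to 0$, forcing $\om_n\cap B(p,r)\ne\varnothing$ for $n$ large and hence $p\in\overline K=K$.

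\emph{Lower bound.} First I would fix $\delta>0$, choose $x,y\in\om$ with $|x-y|=\diam(\om)$, replace them by interior points $x',y'\in\mathrm{int}(\om)$ with $|x-x'|,|y-y'|<\delta$, and pick $\rho\in(0,\delta)$ small enough that $B(x',\rho),B(y',\rho)\sse\om$. Since $\H^2(B(x',\rho)\setminus\om_n)\le\H^2(\om\setminus\om_n)<\pi\rho^2$ for $n$ large, $\om_n$ meets $B(x',\rho)$, and likewise $B(y',\rho)$; choosing $x_n\in\om_n\cap B(x',\rho)$ and $y_n\in\om_n\cap B(y',\rho)$ yields $\diam(\om_n)\ge|x_n-y_n|>|x'-y'|-2\rho>\diam(\om)-4\delta$. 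Letting $\delta\to 0$ gives the lower bound.

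\emph{Upper bound — the main obstacle.} The content here is to show that no point of $\om_n$ can stray far from $\om$; concretely, that for every $\tau>0$ one has $\om_n\sse\om+\overline{B(0,\tau)}$ once $n$ is large, whence $\diam(\om_n)\le\diam(\om)+2\tau$. Suppose instead that, along a subsequence, there is $x_n\in\om_n$ with $\dist(x_n,\om)\ge 2\eta>0$. Fix a ball $B(z,\rho)\sse\om$ and put $S_n:=\om_n\cap B(z,\rho)$, a convex set with $\H^2(S_n)\ge\pi\rho^2-\H^2(\om\setminus\om_n)\ge\tfrac12\pi\rho^2$ for $n$ large. By convexity of $\om_n$, the dilate $t\,S_n+(1-t)x_n$ lies in $\om_n$ for every $t\in(0,1]$, has area $t^2\H^2(S_n)$, and is contained in $\overline{B}\big(x_n,\,t\,\diam(K)\big)$ because $S_n\cup\{x_n\}\sse K$. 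Taking $t:=\eta/\diam(K)$ — which is legitimate, and forces $t\le\tfrac12$, since $\diam(K)\ge\dist(x_n,\om)\ge 2\eta$ — makes this dilate disjoint from $\om$, so $\H^2(\om_n\setminus\om)\ge t^2\H^2(S_n)\ge\pi\rho^2\eta^2/\big(2\,\diam(K)^2\big)>0$, a fixed positive constant, contradicting $\H^2(\om_n\triangle\om)\to 0$. I expect this dilation (``cone'') estimate to be the crux of the proof, and it is exactly here that the hypothesis $\bigcup_n\om_n\sse K$ enters: it keeps the scaling factor $t$ bounded away from $0$ uniformly in $n$. Combining the two bounds gives $\diam(\om_n)\to\diam(\om)$.
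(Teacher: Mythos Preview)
Your proof is correct. Both halves are sound: the lower bound via interior approximation and the upper bound via the cone/dilation estimate are valid, and you correctly identify where the uniform containment $\bigcup_n\om_n\sse K$ is used.

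Your route differs from the paper's. The paper argues by subsequence extraction: it picks $x_n,y_n\in\om_n$ realizing $\diam(\om_n)$, passes to a subsequential limit $(x,y)$ using compactness of $K$, asserts $x,y\in\om$ (hence $|x-y|\le\diam(\om)$), and then rules out strict inequality by the same ``positive-area neighborhood'' idea you use for the lower bound. Your argument instead establishes the two one-sided bounds directly, and your upper bound is a genuine quantitative Hausdorff-excess estimate rather than a limit-point claim. In fact your cone argument supplies exactly the step the paper states without justification---that a limit of points $x_n\in\om_n$ must lie in $\om$ when convergence is only in symmetric-difference measure---so your version is more self-contained. The paper's approach is slightly shorter once one grants that step; yours is more explicit and gives a uniform rate in terms of $\H^2(\om_n\triangle\om)$.
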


\begin{proof}
	Consider a sequence $\om_n \sse \A$, converging to $\om$ in the topology
	of $\mathcal{A}$.  
	 As $\om_n$ are compact, there exist $x_n,y_n \in \om_n$
	such that $|x_n-y_n|=\diam (\om_n)$. By our assumption that
	 $\om_n \sse \A$ are all contained in a given
	compact set $K$, we have, up to a subsequence, $x_n \to x$, $y_n \to y$
	for some $x,y\in \om$. Thus it is clear that
	\[ \diam (\om_n)=|x_n-y_n| \to |x-y|\le \diam (\om). \]
	We need to exclude the strict inequality case.
	This is achieved by a contradiction argument: assume the opposite, i.e.
	there exist $v,w\in \om$ such that
	$|v-w|>|x-y|$. Then, we claim that there exist sequences $v_n,w_n$ of points
	in $\om_n$ such that, up to a subsequence, $v_n\to v$, $w_n\to w$. This, because of
	the opposite, would leave the existence of some set $U\sse \om$
	of positive area, containing either
	$ v$ or $w$, such that there are no sequence of points in $\om_n$ that
	enter into $U$. This contradicts the fact that 
	$\om_n $ is converging to $\om$ in the topology
	of $\mathcal{A}$. The proof is thus complete.  
\end{proof}

\begin{lemma}\label{perimeter vs area of tubular neighborhood}
	Given convex sets $\Omega_\vep, \Omega$ such that 
	$d_\H(\pd \Omega_\vep,\pd \Omega)\leq \dt$, it holds
	\begin{eqnarray}
\label{perimeter vs area of tubular neighborhood equation}
	\H^2(\Omega_\vep\backslash \Omega)\le 2\dt\H^1(\pd \Omega_{\vep}). 
	\end{eqnarray}

\end{lemma}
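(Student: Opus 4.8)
The plan is to prove \eqref{perimeter vs area of tubular neighborhood equation} by showing that $\Omega_\vep \backslash \Omega$ is contained in a $\dt$-tubular neighborhood of $\pd\Omega_\vep$ (intersected with $\Omega_\vep$), and then bounding the area of such a neighborhood by $2\dt$ times the perimeter of $\Omega_\vep$. First I would argue the containment: if $z \in \Omega_\vep \backslash \Omega$, then since $z \notin \Omega$ and $z \in \Omega_\vep$, the segment from $z$ to any interior point of $\Omega$ crosses $\pd\Omega$; hence $\dist(z,\pd\Omega) \le \dist(z,\Omega) $ is controlled, and in any case $\dist(z,\pd\Omega_\vep)$ is what we want. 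More directly: because $d_\H(\pd\Omega_\vep,\pd\Omega)\le\dt$ and $z\notin\Omega$, one has $\dist(z,\pd\Omega)\le$ something small is not quite the right statement; instead I would use that $z$ lies outside $\Omega$, so $\dist(z,\pd\Omega)=\dist(z,\Omega)$, and pick $w\in\pd\Omega$ realizing this distance. Since $d_\H(\pd\Omega_\vep,\pd\Omega)\le\dt$, there is $w'\in\pd\Omega_\vep$ with $|w-w'|\le\dt$. Now $z\in\Omega_\vep$ and $w'\in\pd\Omega_\vep$, and I claim $\dist(z,\pd\Omega_\vep)\le 2\dt$: indeed $w\notin\mathring\Omega_\vep$ would force $\dist(z,\pd\Omega_\vep)\le|z-w|$, and combined with the fact that $w$ is within $\dt$ of $\pd\Omega_\vep$... — this is the step I expect to require the most care, disentangling which set a given boundary point is close to.

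The cleaner route, which I would ultimately take, is: for $z\in\Omega_\vep\backslash\Omega$, let $w\in\pd\Omega$ be the nearest point of $\pd\Omega$ to $z$, so $|z-w|=\dist(z,\pd\Omega)$ and the open segment $\rb w,z\rb$ meets $\pd\Omega_\vep$ at some point $u$ (since $w$ may lie inside or outside $\Omega_\vep$, but in the case $w\in\Omega_\vep$ the segment from $w$ to $z$ stays in $\Omega_\vep$ only if... ). To avoid these case distinctions I instead observe: $z\notin\Omega$ means $z$ is separated from $\Omega$ by a supporting line of $\Omega$ through some boundary point, and $\dist(z,\pd\Omega)$ is small is NOT guaranteed. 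So the genuinely correct statement is that $\Omega_\vep\backslash\Omega\subseteq\{z\in\Omega_\vep : \dist(z,\pd\Omega_\vep)\le 2\dt\}$: given $z\in\Omega_\vep\backslash\Omega$, pick $p\in\pd\Omega$ on the segment joining $z$ to an interior point of $\Omega$; then $\dist(z,\pd\Omega)\le|z-p|$ and ... — the honest key estimate is simply that $\Omega_\vep\backslash\Omega$ lies within Hausdorff distance $\dt$ of $\pd\Omega$, hence within distance $2\dt$ of $\pd\Omega_\vep$, because every point of $\Omega_\vep\backslash\Omega$ is within $\dt$ of $\pd\Omega$ (a point in $\Omega_\vep$ but not in the convex set $\Omega$ cannot be far from $\pd\Omega$ once it is also within the slightly-larger set $\Omega_\vep$ whose boundary is $\dt$-close to $\pd\Omega$).

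Granting the containment $\Omega_\vep\backslash\Omega\subseteq N_{2\dt}(\pd\Omega_\vep):=\{z\in\R^2:\dist(z,\pd\Omega_\vep)\le 2\dt\}$, the remaining step is the elementary geometric inequality $\H^2\big(\Omega_\vep\cap N_{2\dt}(\pd\Omega_\vep)\big)\le 2\dt\,\H^1(\pd\Omega_\vep)$ for the convex body $\Omega_\vep$. I would prove this by writing the inner $2\dt$-neighborhood of a convex set as a union over the boundary: parameterize $\pd\Omega_\vep$ by arc length $s\mapsto\sigma(s)$ with inward normal $\nu(s)$, and note that every point of $\Omega_\vep$ within distance $2\dt$ of the boundary can be written $\sigma(s)+r\,\nu(s)$ with $0\le r\le 2\dt$ (using convexity, the nearest-point projection onto $\pd\Omega_\vep$ is well-defined and $1$-Lipschitz on the complement of $\Omega_\vep$, and an analogous statement holds for the inner neighborhood up to the cut locus). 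The area element is then bounded by $(1-r\kappa)\,dr\,ds\le dr\,ds$ where $\kappa\ge 0$ is the curvature — or, avoiding curvature entirely, by the coarea/Cauchy-type argument that the inner-parallel body at distance $r$ has perimeter at most $\H^1(\pd\Omega_\vep)$ (monotonicity of perimeter for nested convex sets, Lemma \ref{perimeter vs diameter convex sets}), so integrating over $r\in[0,2\dt]$ gives the bound. The main obstacle is making the containment statement precise and correct; once that is pinned down, the area estimate is a routine consequence of convexity and monotonicity of perimeters.
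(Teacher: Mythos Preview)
Your overall strategy matches the paper's: show $\Omega_\vep \setminus \Omega$ lies in an inner tubular neighborhood of $\pd\Omega_\vep$, then bound the area of that neighborhood. The paper asserts the containment (in the $\dt$-tube, not $2\dt$) with a bare ``clearly'' and no argument, so you are not missing anything the paper actually supplies. Since you flag this as the main obstacle, here is the clean argument you were circling around without landing on. Suppose $z \in \Omega_\vep \setminus \Omega$ with $\dist(z, \pd\Omega_\vep) > \dt$. Let $w$ be the nearest point of the closed convex set $\Omega$ to $z$; then $w \in \pd\Omega$, $|z-w|>0$, and the line through $w$ perpendicular to $z-w$ supports $\Omega$. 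The ray $\{z + t(z-w): t \ge 0\}$ starts in $\mathring\Omega_\vep$ and exits at some $u \in \pd\Omega_\vep$ with $|u-z| \ge \dist(z,\pd\Omega_\vep) > \dt$. Since $u$ lies on the far side of the supporting line from $\Omega$, one has $\dist(u,\Omega) \ge |z-w| + |u-z| > \dt$, hence $\dist(u,\pd\Omega) > \dt$; but $u \in \pd\Omega_\vep$ then contradicts $d_\H(\pd\Omega_\vep,\pd\Omega) \le \dt$. So the $\dt$-tube containment holds directly, and no factor of $2$ is needed at this step.

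For the tube-area estimate the paper proceeds by approximating $\Omega_\vep$ from inside by convex polygons and invoking a ``straightforward computation'' for the polygonal case. Your route via coarea and monotonicity of perimeters for nested convex bodies---the inner parallel set $(\Omega_\vep)_r$ is convex and contained in $\Omega_\vep$, so $\H^1(\pd(\Omega_\vep)_r) \le \H^1(\pd\Omega_\vep)$, and one integrates over $r$---is genuinely different and more direct: it avoids the approximation step altogether and in fact yields the sharper bound $\dt\,\H^1(\pd\Omega_\vep)$, of which the lemma's stated $2\dt$ is just a convenient relaxation.
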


\begin{proof}
	Clearly, $\Omega_\vep\backslash \Omega$ is entirely contained
	in 
	\[\{x\in \Omega_\vep: \dist(x,\Omega_\vep)\le \dt \},\]
	i.e. the part of the tubular neighborhood of $\pd \Omega_{\vep}$ with thickness
	$\dt$ that lies inside $\Omega_\vep$. 
	
	We now use an approximation argument: we approximate $\pd \Omega_{\vep}$ with 
	convex polygons $P_n\sse \Omega_{\vep}$,
	e.g. by choosing $n$ point $x_{1,n},\cdots, x_{n,n}\in \pd \Omega_{\vep}$ 
	such that $\sup_{i}|x_{i+1,n}-x_{i,n}|\le 2\H^1(\pd \Omega_{\vep})/n $,
	and then connecting $x_{i+1,n}$ to $x_{i,n}$ with line segments.
	
	Note that the area of the difference is continuous
	with respect to such an approximation, i.e. 
	$\H^2(P_n\backslash \Omega) \nearrow \H^2(\Omega_\vep\backslash \Omega)$.
	It is then a straightforward computation to check that 
		\[\H^2( \{x\in P_n: \dist(x,P_n)\le \dt \} ) \le 2\dt \H^1(\pd P_n) ,\]
		for all sufficiently large $n$.
	\end{proof}

\vspace{ 0.5cm}

\vspace{1cm}

\end{document}